\pgfplotsset{compat=newest}
\let\blx@rerun@biber\relax
\theoremstyle{plain}
	\newtheorem{theorem}{Theorem}
	\newtheorem{lemma}{Lemma}
	\newtheorem{proposition}{Proposition}
	\newtheorem{corollary}{Corollary}
\theoremstyle{definition}
	\newtheorem{assumption}{Assumption}
	\newtheorem{remark}{Remark}
\renewcommand{\vec}[1]{{\boldsymbol{#1}}}
\newcommand{\jump}[1]{[\![{#1}]\!]}
\newcommand{\dist}{{\operatorname{dist}}}
\newcommand{\diam}{{\operatorname{diam}\,}}
\newcommand{\R}{{\mathbb R}}
\renewcommand{\P}{{\mathbb P}}
\newcommand{\RecOp}{{I_h}}
\newcommand{\Faces}{{\mathcal{F}(\mathcal{T}_h)}}    
\newcommand{\Hdiv}{{\vec{H}(\operatorname{div},\Omega)}}
\newcommand{\Hdivnull}{{\vec{H}_0(\operatorname{div},\Omega)}}
\newcommand{\abssec}[1]{\noindent\small {\bfseries #1\quad}\ignorespaces}
\renewenvironment{abstract}{\abssec{Abstract}}{\par\vspace{1em}}
\newenvironment{keywords}{\abssec{Keywords}}{\par\vspace{1em}}
\newenvironment{MSC}{\abssec{Mathematics Subject Classification (2010)}}{\par\vspace{1em}}
\title{Pressure-robust error estimate\\ of optimal order for the Stokes equations\\ on domains with edges}
\date{\today}
\author{Thomas Apel \and Volker Kempf}
\begin{document}
\maketitle

\begin{abstract}
	The velocity solution of the incompressible Stokes equations is not affected by changes of the right hand side data in form of gradient fields. 
	Most mixed methods do not replicate this property in the discrete formulation due to a relaxation of the divergence constraint which means that they are not pressure-robust. 
	A recent reconstruction approach for classical methods recovers this invariance property for the discrete solution, by mapping discretely divergence-free test functions to exactly divergence-free functions in the sense of $\vec{H}(\operatorname{div})$.
	Moreover, the Stokes solution has locally singular behavior in three-dimensional domains near concave edges, which degrades the convergence rates on quasi-uniform meshes and makes anisotropic mesh grading reasonable in order to regain optimal convergence characteristics. 
	Finite element error estimates of optimal order on meshes of tensor-product type with appropriate anisotropic grading are shown for the pressure-robust modified Crouzeix--Raviart method using the reconstruction approach.
	Numerical examples support the theoretical results.
\end{abstract}
\begin{keywords}	
	anisotropic finite elements, incompressible Stokes equations, divergence-free methods, pressure-robustness, edge singularity
\end{keywords}
\begin{MSC} 
	65N30, 65N15, 65N12
\end{MSC}

\section{Introduction}
When considering polyhedral domains, the solution of the Stokes equations
\begin{subequations}\label{eq:stokes}
\begin{align}
	-\nu \Delta \vec{u} + \nabla p &= \vec{f}, \label{eq:stokes_impulse}\\
	\nabla \cdot \vec{u} &= 0,
\end{align} 
\end{subequations}
shows in general singular behavior near corners and edges. 
On quasi-uniform meshes, this leads to sub-optimal performance of standard numerical methods, which can be remedied by local mesh refinement near the singular sections of the boundary. 
Isotropic refinement can compensate the negative effect of the singular solution, but also leads to over-refinement near edges and thus a waste of computational resources. 
Anisotropic refinement on the other hand can recover the optimal convergence rate \cite{Apel1999,ApelNicaiseSchoberl2001,ApelNicaiseSchoberl2001:2}, while the number of elements $N$ in the mesh still satisfies $N \sim h^{-3}$, where $h$ is the mesh size parameter. 

Unfortunately, many classical mixed methods do not fulfill the discrete inf-sup stability condition independently of the aspect ratio of the triangulation, which may be unbounded in the case of anisotropic grading.
For instance, the standard proof of the inf-sup condition for the Taylor--Hood and Mini-element leads to a constant that depends on the aspect ratio. 
While for the lowest order Taylor--Hood pair a new proof has been found recently that shows inf-sup stability on a class of anisotropic meshes \cite{BarrenecheaWachtel2019}, the Mini-element is reported to become unstable with decreasing minimum angle in the triangulation \cite{AcostaDuran1999}.

However, several inf-sup stable methods are known for anisotropic triangulations in two dimensions, e.g. the Bernardi--Raugel and related elements \cite{ApelNicaise2004} and the stabilized $Q_1/Q_1$, $Q_1/Q_0$ and rotated $\tilde{Q}_1/P_0$ elements for quadrilaterals \cite{Becker1995,BeckerRannacher1995}. 
Additionally, results are available for the $hp$-version finite element method, see e.g. \cite{AinsworthCoggins2000,AinsworthCoggins2002,SchotzauSchwabStenberg1999}.
The Crouzeix--Raviart element \cite{CrouzeixRaviart1973}, which we will focus on in this contribution, is inf-sup stable on simplicial triangulations in two and three dimensions, without any condition on the mesh \cite[Lemma 3.1]{ApelNicaiseSchoberl2001}. 

In addition to its low regularity near concave edges, the velocity solution of the Stokes problem is not affected by changes in form of gradient fields on the right hand side.
This property leads to the notion \emph{velocity-equivalence} of forces, i.e. $\vec{f}_1, \vec{f}_2 \in \vec{L}^2(\Omega)$ are velocity-equivalent, $\vec{f}_1 \simeq \vec{f}_2$, if they lead to the same velocity solution of \eqref{eq:stokes}, see \cite{GaugerLinkeSchroeder2019}. 
That is the case if and only if they differ by a gradient field, see e.g. \cite{AhmedLinkeMerdon2018,JohnLinkeMerdonNeilanRebholz2017,LinkeMerdonNeilan2020}. 
Reproducing this continuous property on the discrete level poses an additional difficulty for discretization schemes, and most classical methods do not overcome it. In fact, error estimates for classical $\vec{H}^1$-conforming methods are of the form, see e.g. \cite{JohnLinkeMerdonNeilanRebholz2017,GiraultRaviart1986},
\begin{equation}
	\norm{\vec{u}-\vec{u}_h}_{1} \leq 2(1+C_F) \inf_{\vec{v}_h\in\vec{X}_h} \norm{\vec{u}-\vec{v}_h}_{1} + \frac{1}{\nu} \inf_{q_h\in Q_h} \norm{p - q_h}_{0},
\end{equation}
where $\norm{\cdot}_k$ is the standard $\vec{H}^k(\Omega)$-Sobolev norm and $C_F$ is the stability constant of the Fortin operator of the mixed method. This type of estimate implies that in settings where the continuous pressure is more difficult to approximate compared to the velocity, the velocity approximation can be highly inaccurate. 

Consider for example a hydrostatic case where the exact velocity is given as $\vec{u}\equiv \vec{0}$ and the continuous pressure is a polynomial of order $k$. Then for classical methods using piecewise polynomials of order less than $k$ for the pressure approximation, in general inaccurate discrete velocity solutions $\vec{u}\not\equiv \vec{0}$ can be observed. In contrast, so called pressure-robust methods, i.e. methods that see the velocity-equivalence of forces, yield the exact velocity solution, even for lowest-order mixed methods with piecewise constant pressure approximation, see \cite[Section 2.5]{GaugerLinkeSchroeder2019}.

Additionally to the naturally pressure-robust class of $\vec{H}(\operatorname{div})$-conforming finite element methods, see e.g. \cite{BoffiBrezziFortin2013,SchroederLehrenfeldLinkeLube2018,Schroeder2019}, a recent approach using a reconstruction operator on the velocity test functions in the linear form showed that most classical mixed methods can be made pressure-robust at the cost of an additional consistency error, see e.g. \cite{LedererLinkeMerdonSchoberl2017,Linke2014,JohnLinkeMerdonNeilanRebholz2017}. 
In \cite{ApelKempfLinkeMerdon2020} the pressure-robust modified Crouzeix--Raviart element was analyzed on anisotropic triangulations, using the assumption of a regular solution, i.e. $(\vec{u},p)\in\vec{H}^2(\Omega) \times H^1(\Omega)$. 
In this article we extend those results to the case of domains with concave edges and low regularity of the exact solutions.

The main contribution of this paper is a pressure-robust estimate for the velocity solution of the modified Crouzeix--Raviart method in low-regularity settings due to non-smooth domains. 
The estimate shows that when appropriate anisotropic mesh grading towards a non-convex edge is used, an optimal convergence rate can be achieved by the pressure robust method.
We provide an estimate on the pressure error for the modified method in the anisotropic setting as well.

The article is structured as follows. \Cref{sec:stokes} introduces the problem and basic notation. 
The type of mesh and the modified Crouzeix--Raviart method is described in \Cref{sec:discretization}, and \Cref{sec:Helmholtz} shows some aspects of the Helmholtz--Hodge decomposition of vector fields which are important to the analysis. 
\Cref{sec:error} contains the a-priori error analysis, \Cref{sec:example} shows the performance of the method with the help of two numerical examples.

\section{Continuous Stokes problem}\label{sec:stokes}
Consider a prismatic domain $\Omega = G \times Z$, where $G$ is a polygonal shape with one concave corner at which the interior angle is $\omega\in (\pi, 2\pi)$, and $Z$ is a bounded interval. 
To facilitate notation we assume that the non-convex corner of $G$ is placed at the origin, i.e. the relevant edge of $\Omega$ is located on the $z$-axis.
On the domain $\Omega$, consider the incompressible Stokes equations \eqref{eq:stokes} with homogeneous Dirichlet boundary condition
\begin{equation*}
	\vec{u} = 0 \quad \text{on } \partial \Omega,
\end{equation*}
where $\nu$ is the kinematic viscosity and vector valued quantities are denoted in bold symbols.
For $\vec{f} \in \vec{L}^2(\Omega)$, the corresponding weak formulation given by
\begin{subequations}\label{eq:StokesContWeak}
\begin{align}
	\nu(\nabla\vec{u},\nabla\vec{v}) - (\nabla \cdot \vec{v},p) &= (\vec{f},\vec{v})	&&\forall \vec{v}\in\vec{X},	\\
	(\nabla \cdot \vec{u},q) &= 0 &&\forall q\in Q,
\end{align}
\end{subequations}
has a unique solution $(\vec{u},p) \in \vec{X}\times Q$, see e.g. \cite[Section I.5.1]{GiraultRaviart1986}, where 
\begin{align*}
	&\vec{X} = \vec{H}^1_0(\Omega) = \{\vec{v} \in \vec{H}^1(\Omega): \vec{v} = 0 \text{ on } \partial \Omega\},	\\
	&Q = L^2_0(\Omega) = \{q\in L^2(\Omega): \begingroup\textstyle\int\endgroup_\Omega q = 0 \},
\end{align*}
and $(\cdot,\cdot)$ denotes the $\vec{L}^2(\Omega)$ scalar product.
With the space of divergence free functions
\begin{equation*}
	\vec{V}^0 = \{\vec{v}\in \vec{X} : (\nabla\cdot \vec{v}, q) = 0 \quad \forall q\in Q\},
\end{equation*} 
we can reformulate the problem, see \cite[Section I.5.1]{GiraultRaviart1986}: find $\vec{u}\in \vec{V}^0$, so that
\begin{equation*}
	\nu (\nabla\vec{u},\nabla\vec{v}) = ( \vec{f},\vec{v} ) \quad \forall \vec{v}\in\vec{V}^0.
\end{equation*}
Additionally to the well known Stokes theory, see e.g. \cite{GiraultRaviart1986}, which states the regularity of the solution in the Hilbert space case as above, Theorem 2.1 in \cite{GaldiSimaderSohr1994} classifies the solution in a more general setting: for $\vec{f}\in\vec{W}^{-1,q}(\Omega)$ and appropriate regularity of the boundary condition we have $(\vec{u},p) \in \vec{W}^{1,q}(\Omega)\times L_0^q(\Omega)$, with $1<q<\infty$.

For the special case of convex prismatic polyhedral domains, we can assume that the solution of problem \eqref{eq:StokesContWeak} satisfies $(\vec{u},p) \in \vec{H}^2(\Omega) \times H^1(\Omega)$, see \cite{Dauge1989}. 
This is in general not the case for non-convex geometries like the ones we are considering, but Theorem 2.1 in \cite{ApelNicaiseSchoberl2001} gives a regularity result for our case in weighted Sobolev spaces. In particular, the derivatives of the solution in the direction parallel to the concave edge have the standard regularity, i.e. $\partial_z \vec{u} \in \vec{H}^1(\Omega)$ and $\partial_z p \in L^2(\Omega)$. The global regularity of $\vec{u}$ is however characterized by $r^\lambda$, where $r$ is the distance to the singular edge and $\lambda$ is the smallest positive solution of 
\begin{equation}\label{eq:lambda}
	\sin(\lambda \omega) = -\lambda \sin(\omega),
\end{equation}
for which $\nicefrac{1}{2} < \lambda < \nicefrac{\pi}{\omega}$ holds, see \cite{Dauge1989}.

\section{Discretization}\label{sec:discretization}
\begin{figure}[t]
	\centering
	\includegraphics[scale=1]{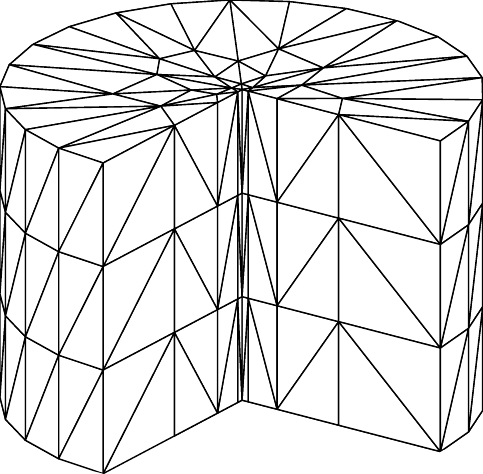}
	\caption{Example mesh with anisotropic grading towards the concave edge}
	\label{fig:mesh_3d}
\end{figure}

\Cref{fig:mesh_3d} shows the type of anisotropically graded tensor-product mesh used for the discretization of the problem, and we briefly describe the process of mesh generation in the following paragraph. 
This type of mesh was introduced in \cite{ApelDobrowolski1992} for the treatment of edge singularities that occur in the Poisson problem, and was used in subsequent works also for the Stokes and Maxwell equations \cite{ApelNicaiseSchoberl2001,ApelNicaiseSchoberl2001:2,FarhloulNicaisePaquet2001,Nicaise2001}.  

Let $\mathcal{D}_h$ be a conforming, shape regular triangulation of the two-dimensional domain $G$, which has a mesh size parameter $h= \max_{D\in\mathcal{D}_h} h_D$, where $h_D =\diam(D)$. 
This mesh is graded towards the non-convex corner, so that the size of every element satisfies
\begin{equation*}
	h_D \sim 	\begin{cases}
					h^{\nicefrac{1}{\mu}}, &\text{if } r_D = 0, \\
					h r_D^{1-\mu}, &\text{if } 0<r_D<R, \\
					h,	&\text{else},
				\end{cases}
\end{equation*}
where $r_D=\inf_{\vec{x}\in D}\{\dist(\vec{x}, \vec{0})\}$ is the distance of an element $D\in \mathcal{D}_h$ to the concave corner, $\mu \in (0,1]$ is a grading parameter and $R>0$ is the radius of the refinement zone. 
The graded two dimensional mesh is extended into the $z$-direction with uniform mesh size $h_3 \sim h$. The resulting prisms are subdivided into tetrahedra, which form the simplicial mesh $\mathcal{T}_h$. 
With $r_T$ being the distance of an element $T\in \mathcal{T}_h$ to the $z$-axis and $h_{1,T}$, $h_{2,T}$, and $h_{3,T}$ the length of the projection on the $x$-, $y$-, and $z$-axis, respectively, the procedure yields a mesh where
\begin{equation*}
	h_{3,T} \sim h,\quad h_{1,T} \sim h_{2,T} \sim 	
								\begin{cases}
									h^{\nicefrac{1}{\mu}}, &\text{if } r_T = 0, \\
									h r_T^{1-\mu}, &\text{if } 0<r_T<R, \\
									h,	&\text{else},
								\end{cases}
\end{equation*}
and the number of elements satisfies $N \sim h^{-3}$. By $\Faces$ we denote the set of facets of the mesh $\mathcal{T}_h$.
\begin{remark}\label{rem:MAC}
	By construction, this type of tensor-product mesh satisfies a maximum angle condition, i.e. all angles between edges and faces of the triangulation are bounded by a constant $\bar{\psi} < \pi$. 
	The subsequent analysis depends on this regularity assumption on the tetrahedra, which means that meshes like the ones used in \cite{LiNicaise2018}, where the maximum angle condition is violated, can not easily be included in the theory.
\end{remark}

Our discretization is nonconforming, thus we need tools to handle potential discontinuities at the interfaces. Let $\jump{\vec{v}}_F$ be the jump of a function $\vec{v}$ over a facet $F$, which is defined for an interior facet belonging to two elements $T_1$ and $T_2$ by
\begin{align*}
	\jump{\vec{v}}_F(\vec{x}) &= \vec{v}|_{T_1}(\vec{x}) - \vec{v}|_{T_2}(\vec{x}),
\end{align*}
see e.g. \cite[Section 1.2.3]{DiPietroErn2012}. On boundary facets we set $\jump{\vec{v}}_F = \vec{v}$. For the velocity approximation we use the Crouzeix--Raviart finite element function space
\begin{equation*}
	\vec{X}_h = \{\vec{v}_h\in \vec{L}^2(\Omega) : \vec{v}_{h}|_T \in \vec{P}_1(T)\quad \forall T\in\mathcal{T}_h,\ \jump{\vec{v}_h}_F(\vec{x}_F) = 0\quad \forall F \in \Faces\},
\end{equation*}
which was introduced in \cite{CrouzeixRaviart1973}, and where $\vec{x}_F$ is the barycenter of a facet $F$. The pressure is approximated in the space of piecewise constants
\begin{equation*}
	Q_h = \{q_h\in Q : q_{h}|_T \in P_0(T)\quad \forall T \in \mathcal{T}_h\},
\end{equation*}
where $P_k(T)$ denotes the space of all polynomials with maximal degree $k$ on the element $T$. 
We also need the broken gradient $\nabla_h:\vec{X}\oplus\vec{X}_h\rightarrow L^2(\Omega)^{d\times d}$ and the broken divergence $\nabla_h\cdot(\cdot) : \vec{X}\oplus\vec{X}_h\rightarrow L^2(\Omega)$, which define the derivatives elementwise for all $T\in\mathcal{T}_h$ by
\begin{align*}
	(\nabla_h\vec{v}_h)|_T &= \nabla(\vec{v}_h|_T),	\\
	(\nabla_h\cdot \vec{v}_h)|_T &= \nabla\cdot(\vec{v}_h|_T),
\end{align*}
and which are on $\vec{X}$ equivalent to the standard operators, see e.g. \cite[Sections 1.2.5, 1.2.6]{DiPietroErn2012}. 
The discrete gradient norm for the space $\vec{X}\oplus\vec{X}_h$ is defined by
\begin{equation*}
	\norm{\vec{v}_h}_{1,h} = \left(\int_\Omega \nabla_h\vec{v}_h : \nabla_h\vec{v}_h\right)^{\nicefrac{1}{2}} = \norm{\nabla_h \vec{v}_h}_0.
\end{equation*}

For the next part we need the function spaces
\begin{align*}
	\Hdiv &= \{ \vec{v} \in \vec{L}^2(\Omega) : \nabla\cdot \vec{v} \in L^2(\Omega) \}, \\
	\Hdivnull &= \{\vec{v} \in \Hdiv : \vec{v}\cdot \vec{n} = 0 \text{ on } \partial \Omega\},
\end{align*}
where $\vec{n}$ denotes the unit outward normal vector on $\partial \Omega$.
Our discretization uses a reconstruction operator on the velocity test functions in the linear form, and in order to yield a pressure-robust method the operator needs to satisfy some properties, see e.g. \cite{Linke2014,BrenneckeLinkeMerdonSchoberl2015,LinkeMerdonNeilan2020}, which we summarize in the following assumption.
\begin{assumption} \label{as:RecOp}
	Assume there is a reconstruction operator $\RecOp:\vec{X}\oplus\vec{X}_h \rightarrow \vec{Y}_h$, where $\vec{Y}_h \subset \Hdivnull$, so that for all $\vec{v}_h \in \vec{X}_h$
	\begin{align}
		\nabla \cdot (\RecOp \vec{v}_h) &= \nabla_h \cdot \vec{v}_h, \nonumber\\
		\norm{\vec{v}_h - \RecOp \vec{v}_h}_0 &\leq c h \norm{\nabla_h \vec{v}_h}_0. \label{eq:interpolation_error}
	\end{align}
\end{assumption}
 
When using the approximation spaces $\vec{X}_h$ and $Q_h$, the lowest-order Raviart--Thomas and Brezzi--Douglas--Marini interpolation operators satisfy this assumption, see \cite{BrenneckeLinkeMerdonSchoberl2015,Linke2014} for the isotropic and \cite{ApelKempfLinkeMerdon2020} for the anisotropic case. 
For our intended use of the method in an anisotropic setting, the constant in estimate \eqref{eq:interpolation_error} must be independent of the aspect ratio of the mesh. 
Under the mild assumption of the maximum angle condition which is satisfied for the type of mesh described above, see \Cref{rem:MAC}, this is the case for both Raviart--Thomas interpolation, see \cite{AcostaApelDuranLombardi2011}, and Brezzi--Douglas--Marini interpolation, see \cite{ApelKempf2020}.

Using the discrete bilinear forms
\begin{alignat*}{2}
	&a_h:\vec{X}_h\times\vec{X}_h \rightarrow \R,&&\quad a_h(\vec{u}_h,\vec{v}_h) = \nu \int_\Omega \nabla_h\vec{u}_h : \nabla_h\vec{v}_h,	\\
	&b_h:\vec{X}_h\times Q_h \rightarrow \R,&&\quad b_h(\vec{v}_h, q_h) = -\int_\Omega q_h \nabla_h\cdot\vec{v}_h,
\end{alignat*}
we get the discrete weak formulation
\begin{subequations}\label{eq:stokesweak}
\begin{align}
		a_h(\vec{u}_h,\vec{v}_h) + b_h(\vec{v}_h,p_h) &= ( \vec{f}, \RecOp \vec{v}_h )  &&\forall \vec{v}_h\in \vec{X}_h, \\
		b_h(\vec{u}_h,q_h) &= 0 &&\forall q_h\in Q_h,
\end{align}
\end{subequations}
where $\vec{f}\in\vec{L}^2(\Omega)$ and $\RecOp$ must satisfy \Cref{as:RecOp}.
As in the continuous case, using the space of discretely divergence constrained functions
\begin{equation*}
	\vec{V}^0_h = \left\{\vec{v}_h\in \vec{X}_h : b_h(\vec{v}_h, q_h) = 0 \ \forall q_h\in Q_h\right\} = \{\vec{v}_h \in \vec{X}_h : \nabla_h \cdot \vec{v}_h = 0 \},
\end{equation*}
we write can rewrite the problem, see \cite{LinkeMerdonWollner2017,BrenneckeLinkeMerdonSchoberl2015,GiraultRaviart1986}. Thus $\vec{u}_h\in \vec{V}^0_h$ is uniquely defined by
\begin{equation}\label{eq:stokeselliptic}
	a_h(\vec{u}_h,\vec{v}_h) = ( \vec{f}, \RecOp\vec{v}_h ) \quad \forall \vec{v}_h\in\vec{V}^0_h.	
\end{equation}

To conclude this section, we state the well-known discrete inf-sup stability for the Crouzeix--Raviart element, see e.g. \cite[Lemma 3.1]{ApelNicaiseSchoberl2001}.
\begin{lemma}
	The pair of function spaces $\vec{X}_h \times Q_h$ satisfies the discrete inf-sup condition
	\begin{equation}\label{eq:infsup}
		\inf_{q_h\in Q_h\setminus \{0\}} \sup_{\vec{v}_h \in \vec{X}_h\setminus\{0\}} \frac{b_h(\vec{v}_h, q_h)}{\norm{q_h}_0 \norm{\vec{v}_h}_{1,h}} \geq \tilde{\beta} > 0,
	\end{equation}
	where the discrete inf-sup constant $\tilde{\beta}$ does not depend on the mesh size parameter $h$ or the regularity of the mesh.
\end{lemma}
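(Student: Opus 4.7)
The plan is to invoke Fortin's criterion: construct a bounded linear operator $\Pi_h : \vec{X} \to \vec{X}_h$ which, composed with the continuous inf-sup condition for $\vec{X} \times Q$, transfers the stability to the discrete pair. The continuous inf-sup condition for $\vec{X} \times Q$ holds with a constant $\beta > 0$ depending only on $\Omega$, see \cite{GiraultRaviart1986}. By the standard Fortin argument, it suffices to exhibit $\Pi_h$ satisfying two properties: (i) a commuting diagram type identity $b_h(\Pi_h \vec{v} - \vec{v}, q_h) = 0$ for all $\vec{v} \in \vec{X}$ and $q_h \in Q_h$, and (ii) a stability bound $\norm{\Pi_h \vec{v}}_{1,h} \leq C \norm{\vec{v}}_1$ with $C$ independent of $h$ and of the aspect ratio of $\mathcal{T}_h$. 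The discrete inf-sup constant $\tilde{\beta} = \beta/C$ then follows.

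The natural candidate is the canonical Crouzeix--Raviart quasi-interpolant defined facet-wise by preserving face averages, i.e. $\int_F \Pi_h \vec{v} = \int_F \vec{v}$ for every $F \in \Faces$, which uniquely determines $\Pi_h \vec{v} \in \vec{X}_h$ since CR degrees of freedom are facet means (via the midpoint value for linear functions). For property (i), fix $T \in \mathcal{T}_h$ and apply the divergence theorem elementwise: since $q_h|_T$ is constant,
\begin{equation*}
	-\int_T q_h \,\nabla \cdot (\Pi_h \vec{v} - \vec{v}) = -q_h|_T \sum_{F \subset \partial T} \int_F (\Pi_h \vec{v} - \vec{v}) \cdot \vec{n}_T = 0,
\end{equation*}
by the defining property of $\Pi_h$. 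Summing over elements yields $b_h(\Pi_h \vec{v} - \vec{v}, q_h) = 0$.

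The main obstacle, and the only place mesh regularity could enter, is property (ii). The goal is an elementwise bound $\norm{\nabla \Pi_h \vec{v}}_{0,T} \leq C \norm{\nabla \vec{v}}_{0,T}$ with $C$ independent of the shape of $T$. The plan is to use a Bramble--Hilbert / Poincaré argument adapted to tetrahedra that only requires the maximum angle condition (\Cref{rem:MAC}): write $\Pi_h \vec{v} - \vec{c}$ on $T$ for any constant $\vec{c} \in \R^3$, use that $\nabla$ applied to a linear function is constant, estimate $\norm{\nabla \Pi_h \vec{v}}_{0,T}$ in terms of the face averages $\abs{\int_F (\vec{v}-\vec{c})}$ via an inverse-type identity on $T$, and finally exploit a trace-Poincaré inequality of the form $\abs{F}^{-1/2}\norm{\vec{v}-\vec{c}}_{0,F} \leq C \norm{\nabla \vec{v}}_{0,T}$ (with $\vec{c}$ the mean of $\vec{v}$ on $T$) that remains valid on anisotropic elements satisfying the maximum angle condition. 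Summing over all $T \in \mathcal{T}_h$ yields (ii) with a constant depending only on $\bar{\psi}$, completing the Fortin argument and establishing \eqref{eq:infsup}.
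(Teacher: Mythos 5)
Your overall route---Fortin's criterion applied with the facet-average Crouzeix--Raviart interpolant---is exactly the standard argument behind the result the paper cites (Lemma 3.1 of Apel--Nicaise--Sch\"oberl; the paper itself gives no proof), and your property (i) is correct. The gap is in property (ii). First, the trace--Poincar\'e inequality you invoke, $\abs{F}^{-1/2}\norm{\vec{v}-\vec{c}}_{0,F}\leq C\norm{\nabla\vec{v}}_{0,T}$, is not dimensionally consistent: for $\vec{v}$ linear the left side scales like $\diam(T)\,\abs{\nabla\vec{v}}$ while the right side scales like $\abs{T}^{1/2}\abs{\nabla\vec{v}}$, so no $h$-independent constant exists. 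In the correctly scaled form the right side carries the factor $\diam(T)\,\abs{T}^{-1/2}$, and pushing your argument through the CR basis representation (where $\abs{\nabla\phi_F}\sim\abs{F}/\abs{T}$) produces the combined factor $\abs{F}\diam(T)/\abs{T}$; on a thin tetrahedron with edge lengths $h_1\leq h_2\leq h_3$ this behaves like $h_3/h_1$, i.e., it blows up with the aspect ratio \emph{even under} the maximum angle condition. Even if you repaired this with sharp directional (anisotropic) trace estimates, you would at best obtain a constant depending on $\bar{\psi}$, which is strictly weaker than the lemma's claim that $\tilde{\beta}$ is independent of any regularity of the mesh.

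The repair makes (ii) trivial and is the reason the lemma can claim full mesh-independence: the facet-average interpolant satisfies a mean-gradient identity. For each component $v$ of $\vec{v}$, each $T\in\mathcal{T}_h$ and each constant vector $\vec{a}\in\R^3$, Gauss's theorem and preservation of facet means give
\begin{equation*}
	\int_T \nabla(\Pi_h v)\cdot\vec{a}
	= \sum_{F\subset\partial T} (\vec{a}\cdot\vec{n}_F)\int_F \Pi_h v
	= \sum_{F\subset\partial T} (\vec{a}\cdot\vec{n}_F)\int_F v
	= \int_T \nabla v\cdot\vec{a},
\end{equation*}
so $\nabla(\Pi_h v)|_T = \abs{T}^{-1}\int_T \nabla v$ is the mean of the gradient, whence $\norm{\nabla(\Pi_h v)}_{0,T}\leq\norm{\nabla v}_{0,T}$ by Cauchy--Schwarz with constant exactly $1$ on an \emph{arbitrary} simplex---no maximum angle condition, no Bramble--Hilbert, no trace inequality. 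Summing over elements and components yields $\norm{\Pi_h\vec{v}}_{1,h}\leq\norm{\vec{v}}_1$, and your Fortin argument then gives $\tilde{\beta}\geq\beta$ with the continuous inf-sup constant $\beta$ of $\Omega$, exactly matching the statement. (Your use of the conforming Fortin scheme for the nonconforming pair is unproblematic here, since $b_h(\Pi_h\vec{v},q_h)=b_h(\vec{v},q_h)$ is precisely your property (i).)
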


\section{Helmholtz--Hodge decomposition}\label{sec:Helmholtz}
This section introduces some aspects of the Helmholtz--Hodge decomposition of vector fields, which is needed for overall context and explanation.
The main idea of this section is from \cite[Section 3]{LinkeMerdonNeilan2020}. 

Every vector field $\vec{g} \in \vec{L}^2(\Omega)$ can be uniquely decomposed into $\vec{g} = \P(\vec{g}) + \nabla \phi$, where $\phi \in H^1(\Omega)/\R$ and 
\begin{equation*}
	\P(\vec{g}) \in \vec{L}_\sigma^2(\Omega) = \{\vec{v}\in\vec{L}^2(\Omega) : (\nabla q, \vec{v}) = 0\quad \forall q\in H^1(\Omega)\}.
\end{equation*}
The function $\P(\vec{g})$ is called Helmholtz--Hodge projection of $\vec{g}$, see e.g. \cite[Corollary I.3.4]{GiraultRaviart1986}. The operator $\P(\cdot):\vec{L}^2(\Omega) \rightarrow \vec{L}_\sigma^2(\Omega)$ is an $\vec{L}^2$-orthogonal projection, i.e. 
\begin{equation*}
	(\P(\vec{g}),\vec{v}) = (\vec{g}, \vec{v}) \quad \forall \vec{v}\in \vec{L}_\sigma^2(\Omega).
\end{equation*}

We can extend the domain of the Helmholtz--Hodge projection operator from $\vec{L}^2(\Omega)$ to $\vec{H}^{-1}(\Omega)$ with range in $(\vec{V}^0)'$, the dual space of $\vec{V}^0$, by defining the projection for every $\vec{g}\in \vec{H}^{-1}(\Omega)$ as the restriction to $\vec{V}^0$, i.e.
\begin{equation*}
	\langle \P(\vec{g}), \vec{v} \rangle = \langle \vec{g}, \vec{v} \rangle \quad \forall \vec{v}\in \vec{V}^0.
\end{equation*}
Note that it holds $\vec{V}^0 \subset \vec{L}^2_\sigma(\Omega)$. For a more detailed and technical introduction of this extension we refer to \cite[Section 2]{Monniaux2006}. A functional $\vec{g}^* \in \vec{H}^{-1}(\Omega)$ with $\vec{L}^2$-representative $\vec{g}$, has the Helmholtz--Hodge projection $\P(\vec{g}^*)\in (\vec{V}^0)'$ with representative $\P(\vec{g}) \in \vec{L}^2(\Omega)$, since by the previous definitions and the Riesz representation theorem it holds for all $\vec{v}\in \vec{V}^0$
\begin{equation*}
	\langle \P(\vec{g}^*) , \vec{v} \rangle = \langle \vec{g}^*, \vec{v} \rangle = (\vec{g}, \vec{v}) = (\P(\vec{g}), \vec{v}).
\end{equation*}

Defining $-\Delta : \vec{H}_0^1(\Omega) \rightarrow \vec{H}^{-1}(\Omega)$ by
\begin{equation*}
	\langle -\Delta \vec{v} , \vec{\psi} \rangle = ( \nabla \vec{v}, \nabla \vec{\psi}) \quad \forall \vec{\psi} \in \vec{H}_0^1(\Omega),
\end{equation*}
according to Lemma 3.1 in \cite{LinkeMerdonNeilan2020} the equality
\begin{equation*}
	\P(-\Delta\vec{u}) = \nu^{-1} \P(\vec{f})
\end{equation*}
holds for the weak Stokes velocity solution $\vec{u}$ with data $\vec{f}$. This means that although in general $-\Delta\vec{u} \in \vec{H}^{-1}(\Omega)$ even for data $\vec{f}\in\vec{L}^2(\Omega)$, it holds $\P(-\Delta \vec{u}) \in \vec{L}^2(\Omega)$ and
\begin{equation}\label{eq:projectoridentity}
	\nu\norm{\P(-\Delta\vec{u})}_0 = \norm{\P(\vec{f})}_0\leq\norm{\vec{f}}_0.
\end{equation}

\begin{lemma}\label{lem:solution_change}
	If $(\vec{u}, p)$ is the solution of \eqref{eq:StokesContWeak} with data $\vec{f} = \P(\vec{f}) + \nabla \phi$, then $(\vec{u}, \nu^{-1}(p-\phi))$ is the solution of the Stokes equations with unit viscosity and data $\nu^{-1} \P(\vec{f})$.
\end{lemma}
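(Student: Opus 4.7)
The plan is to substitute the Helmholtz--Hodge decomposition $\vec{f} = \P(\vec{f}) + \nabla\phi$ into the momentum equation of \eqref{eq:StokesContWeak}, integrate the $\nabla\phi$ term by parts against the test function, and then regroup the gradient potential with the original pressure before dividing through by $\nu$.

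More precisely, starting from
\begin{equation*}
\nu(\nabla\vec{u},\nabla\vec{v}) - (\nabla\cdot\vec{v},p) = (\vec{f},\vec{v}) = (\P(\vec{f}),\vec{v}) + (\nabla\phi,\vec{v}) \qquad \forall \vec{v}\in\vec{X},
\end{equation*}
I would use that $\phi\in H^1(\Omega)/\R$ and $\vec{v}\in\vec{H}^1_0(\Omega)$ to perform the integration by parts
\begin{equation*}
(\nabla\phi,\vec{v}) = -(\phi,\nabla\cdot\vec{v}),
\end{equation*}
where the boundary term vanishes because of the homogeneous Dirichlet condition. Substituting, I move the resulting term to the left-hand side, which yields
\begin{equation*}
\nu(\nabla\vec{u},\nabla\vec{v}) - (\nabla\cdot\vec{v},\, p-\phi) = (\P(\vec{f}),\vec{v}).
\end{equation*}
Dividing by $\nu$ and defining $\tilde{p} = \nu^{-1}(p-\phi)$ then gives exactly the momentum equation of the unit-viscosity Stokes problem with right-hand side $\nu^{-1}\P(\vec{f})$, while the mass equation $(\nabla\cdot\vec{u},q)=0$ is untouched since the velocity is unchanged.

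To finish, I would verify that $\tilde{p}$ lies in $Q=L^2_0(\Omega)$. Since $\phi$ is only determined up to a constant, I fix the representative by imposing $\int_\Omega\phi = 0$; combined with $\int_\Omega p=0$ this yields $\int_\Omega\tilde{p}=0$, and $\tilde{p}\in L^2(\Omega)$ follows from $p,\phi\in L^2(\Omega)$. Uniqueness of the solution of the modified Stokes problem (with unit viscosity and data $\nu^{-1}\P(\vec{f})\in\vec{H}^{-1}(\Omega)$) then identifies the pair $(\vec{u},\tilde{p})$ as its solution.

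There is really no hard step here: the argument is essentially an algebraic manipulation, and the only subtlety is the normalization of $\phi$ so that $\tilde{p}$ lies in $L^2_0(\Omega)$. One small point to be careful about is that although $\vec{f}\in\vec{L}^2(\Omega)$, the Helmholtz--Hodge components $\P(\vec{f})$ and $\nabla\phi$ need only be interpreted as elements of $\vec{L}^2(\Omega)$ as well (so that the $\vec{L}^2$-scalar products are well defined), which is exactly what the decomposition in \Cref{sec:Helmholtz} provides.
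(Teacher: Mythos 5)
Your proof is correct and rests on the same underlying mechanism as the paper's: the invariance of the Stokes velocity under adding a gradient field to the data, followed by division of the momentum equation by $\nu$. The only difference is self-containedness — the paper simply cites this invariance property from the literature, whereas you derive it directly by integrating $(\nabla\phi,\vec{v})$ by parts against $\vec{v}\in\vec{H}^1_0(\Omega)$ and absorbing $\phi$ into the pressure; you also make explicit the zero-mean normalization of $\phi$ that guarantees $\nu^{-1}(p-\phi)\in L^2_0(\Omega)$, a detail the paper's one-line proof leaves implicit.
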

\begin{proof}
	The Stokes equations satisfy a fundamental invariance property, i.e. adding a gradient field to the data only changes the pressure solution, see \cite{JohnLinkeMerdonNeilanRebholz2017}. 
	Thus $(\vec{u}, p - \phi)$ is the solution with data function $\P(\vec{f}) = \vec{f}-\nabla \phi$. 
	Dividing the momentum equation by $\nu$, we get the statement of the lemma.
\end{proof}

\section{A-priori error estimates}\label{sec:error}
For an estimate on the finite element error, the consistency error of the method has to be estimated. For self-containedness we restate \cite[Lemma 3.3]{ApelNicaiseSchoberl2001} which estimates the consistency error for the standard method in the case $\nu=1$. 
\begin{lemma}\label{lem:ANS_consistency}
	Let $(\vec{u}, p)$ be the solution of the Stokes problem with $\nu=1$ and data $\vec{f} \in \vec{L}^2(\Omega)$. Then if the mesh is refined according to $\mu < \lambda$, with $\lambda$ from \eqref{eq:lambda}, the estimate 
	\begin{equation*}
		\abs{(\nabla_h \vec{u}, \nabla_h \vec{v}_h) + b_h(\vec{v}_h,p) - (\vec{f},\vec{v}_h)} \leq c h \norm{\vec{v}_h}_{1,h} \norm{\vec{f}}_0
	\end{equation*}
	holds for all $\vec{v}_h \in \vec{X}_h$. For $\vec{v}_h \in \vec{V}_h^0$ we have the estimate
	\begin{equation*}
		\abs{(\nabla_h \vec{u}, \nabla_h \vec{v}_h) - (\vec{f},\vec{v}_h)} \leq c h \norm{\vec{v}_h}_{1,h} \norm{\vec{f}}_0.
	\end{equation*}
\end{lemma}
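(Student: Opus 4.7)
The plan is to follow the standard nonconforming consistency argument for Crouzeix--Raviart elements, adapted to the anisotropic graded mesh and the edge-singular regularity of the Stokes solution. Since $\vec{f}\in\vec{L}^2(\Omega)$, the strong momentum equation $-\Delta\vec{u}+\nabla p=\vec{f}$ holds a.e.\ in $\Omega$. Testing with $\vec{v}_h\in\vec{X}_h$ and integrating by parts element by element, the volume contributions reproduce $(\nabla_h\vec{u},\nabla_h\vec{v}_h)+b_h(\vec{v}_h,p)-(\vec{f},\vec{v}_h)$, while the inter-element boundaries collect into the facet sum
\begin{equation*}
	\sum_{F\in\Faces}\int_F \bigl((\nabla\vec{u})\vec{n}_F - p\,\vec{n}_F\bigr)\cdot\jump{\vec{v}_h}_F\,\mathrm{d}s,
\end{equation*}
where on boundary facets only the contribution involving $\vec{v}_h$ (interpreted as its own jump) survives.

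The Crouzeix--Raviart constraint $\jump{\vec{v}_h}_F(\vec{x}_F)=\vec{0}$ combined with linearity of $\vec{v}_h$ on $F$ yields $\int_F\jump{\vec{v}_h}_F=\vec{0}$. I would exploit this to subtract, inside each facet integral, the facet mean of $(\nabla\vec{u})\vec{n}_F - p\,\vec{n}_F$ at no cost, reducing the sum to an expression of the form $\sum_F\int_F (g_F - \overline{g}_F)\cdot\jump{\vec{v}_h}_F\,\mathrm{d}s$. Cauchy--Schwarz, followed by an anisotropic trace inequality that moves the jump from $F$ into the adjacent elements and a local $L^2$-approximation estimate for $g_F$ by its mean on $F$, reduces the bound to a sum of element-wise weighted seminorms of $\nabla\vec{u}$ and $p$ multiplied by local mesh sizes, against $\norm{\vec{v}_h}_{1,h}$.

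At this step the regularity and grading enter. Theorem~2.1 of \cite{ApelNicaiseSchoberl2001} guarantees $\partial_z\vec{u}\in\vec{H}^1(\Omega)$ and $\partial_z p\in L^2(\Omega)$, while the cross-sectional second derivatives of $\vec{u}$ and first derivatives of $p$ behave essentially like $r^{\lambda-2}$ and $r^{\lambda-1}$, with $\lambda$ from \eqref{eq:lambda}. The refinement $h_{1,T},h_{2,T}\sim h\,r_T^{1-\mu}$ with $\mu<\lambda$ is designed precisely so that, after multiplication by the correct anisotropic element sizes supplied by the trace and approximation estimates, the weighted sums of these singular quantities are bounded by $c\,h^2\,\norm{\vec{f}}_0^2$, via the a priori estimate for $(\vec{u},p)$ in the relevant weighted Sobolev spaces. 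This yields the first inequality.

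The main obstacle I anticipate is the direction-dependent bookkeeping in the anisotropic trace and approximation inequalities on the thin tetrahedra near the edge: facets whose normal is aligned with the $z$-axis scale differently from the lateral families, and one must track $h_{1,T}, h_{2,T}, h_{3,T}$ separately so that they combine correctly with the singular weights $r^{\lambda-2}$ and $r^{\lambda-1}$ under the grading law. Once this is handled, the second estimate is immediate: for $\vec{v}_h\in\vec{V}_h^0$ one has $\nabla_h\cdot\vec{v}_h=0$ pointwise, so $b_h(\vec{v}_h,p)=-(p,\nabla_h\cdot\vec{v}_h)=0$, and the first inequality collapses to the stated bound for divergence-free test functions.
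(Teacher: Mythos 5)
Your outline is mathematically sound, but be aware that it reconstructs an argument the paper itself does not carry out: the paper's proof of the first estimate is a one-line citation of \cite[Lemma~3.3]{ApelNicaiseSchoberl2001}, and only the second estimate is proved there, exactly as you do, by observing that $\vec{v}_h\in\vec{V}_h^0$ gives $\nabla_h\cdot\vec{v}_h=0$ and hence $b_h(\vec{v}_h,p)=-(\nabla_h\cdot\vec{v}_h,p)=0$. Your sketch of the first estimate --- elementwise integration by parts, zero facet means of $\jump{\vec{v}_h}_F$ from the barycenter condition and the linearity of $\vec{v}_h$ (which indeed also holds on boundary facets, where $\jump{\vec{v}_h}_F=\vec{v}_h$), subtraction of facet means of the flux, anisotropic trace and approximation estimates, and finally the weighted regularity of \cite[Theorem~2.1]{ApelNicaiseSchoberl2001} combined with the grading law $h_{1,T}\sim h_{2,T}\sim h\,r_T^{1-\mu}$, $\mu<\lambda$ --- is precisely the strategy of the cited proof, whose technical core was prepared in \cite{ApelNicaiseSchoberl2001:2}; so in substance you are re-deriving the reference rather than diverging from the paper. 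If you were to carry the sketch out in full, two points need more care than you give them: (i) the elementwise integration by parts must be justified, since for $\vec{f}\in\vec{L}^2(\Omega)$ only the combination $-\Delta\vec{u}+\nabla p$ lies in $\vec{L}^2(\Omega)$, while $\Delta\vec{u}$ and $\nabla p$ are individually singular near the edge, so the facet traces of $(\nabla\vec{u})\vec{n}_F-p\,\vec{n}_F$ on elements touching the $z$-axis only make sense in the weighted spaces of Theorem~2.1 (typically via a density argument); and (ii) your final bound by $\norm{\vec{f}}_0$ tacitly uses the a priori estimate that the weighted norms of $\vec{u}$ and $p$, including the unweighted control of $\partial_z\vec{u}$ and $\partial_z p$, are bounded by $c\norm{\vec{f}}_0$, which is again part of \cite[Theorem~2.1]{ApelNicaiseSchoberl2001} and should be invoked explicitly. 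What the paper's route buys is brevity and a clean separation of concerns; what yours buys is self-containedness, at the cost of redoing the direction-dependent facet bookkeeping that constitutes the bulk of \cite{ApelNicaiseSchoberl2001,ApelNicaiseSchoberl2001:2}.
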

\begin{proof}
	The first inequality is the statement from \cite[Lemma 3.3]{ApelNicaiseSchoberl2001}, and the second holds since for $\vec{v}_h \in \vec{V}_h^0$ we have $\nabla_h \cdot\vec{v}_h = 0$, and thus get
	\begin{equation*}
		b_h(\vec{v}_h, p) = -(\nabla_h \cdot \vec{v}_h, p) = 0. \qedhere
	\end{equation*}
\end{proof}

We can now state the error estimate of the velocity solution of our method. 
It shows that for appropriately refined meshes the method has an optimal order of convergence and is pressure-robust, i.e. the estimate does not depend on the viscosity or the pressure approximability.
\begin{theorem}\label{th:velocity_error}
	Let $(\vec{u}, p)$ be the solution of \eqref{eq:StokesContWeak}, $(\vec{u}_h, p_h)$ the solution of \eqref{eq:stokesweak}, and let the mesh be refined according to $\mu < \lambda$, with $\lambda$ from \eqref{eq:lambda}. In addition, let the reconstruction operator satisfy \Cref{as:RecOp}. Then we have the estimate
	\begin{equation}\label{eq:velocity_error}
		\norm{\vec{u}-\vec{u}_h}_{1,h} \leq 2 \inf_{\vec{v}_h \in \vec{V}_h^0} \norm{\vec{u}-\vec{v}_h}_{1,h} + c h \norm{\P(-\Delta \vec{u})}_0.
	\end{equation}
\end{theorem}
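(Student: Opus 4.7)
The plan is to run a nonconforming Strang-type argument on the discretely divergence-free subspace $\vec{V}_h^0$, exploiting the reconstruction operator to remove the pressure-contribution from the consistency error. Fix $\vec{v}_h \in \vec{V}_h^0$ arbitrary and let $\vec{w}_h = \vec{v}_h - \vec{u}_h \in \vec{V}_h^0$. Then by coercivity and the reduced discrete problem \eqref{eq:stokeselliptic},
\begin{equation*}
    \nu\norm{\vec{w}_h}_{1,h}^2 = a_h(\vec{v}_h - \vec{u}, \vec{w}_h) + \underbrace{\nu(\nabla_h \vec{u}, \nabla_h \vec{w}_h) - (\vec{f}, \RecOp \vec{w}_h)}_{=:\,E(\vec{w}_h)},
\end{equation*}
and the first term is handled by Cauchy--Schwarz, so the whole estimate reduces to controlling the consistency functional $E(\vec{w}_h)$ by $c\,\nu\,h\,\norm{\P(-\Delta \vec{u})}_0 \norm{\vec{w}_h}_{1,h}$. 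Once that is in place, division by $\nu\norm{\vec{w}_h}_{1,h}$, triangle inequality, and taking the infimum yields \eqref{eq:velocity_error}.

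The key step is to replace $\vec{f}$ by its Helmholtz--Hodge projection. Write $\vec{f} = \P(\vec{f}) + \nabla\phi$ with $\phi \in H^1(\Omega)/\R$. Because $\RecOp\vec{w}_h \in \Hdivnull$ and $\nabla\cdot(\RecOp \vec{w}_h) = \nabla_h\cdot \vec{w}_h = 0$ by \Cref{as:RecOp}, integration by parts gives $(\nabla\phi,\RecOp\vec{w}_h) = 0$, so
\begin{equation*}
    (\vec{f}, \RecOp \vec{w}_h) = (\P(\vec{f}), \RecOp \vec{w}_h) = \nu\,(\P(-\Delta \vec{u}), \RecOp \vec{w}_h),
\end{equation*}
using the identity $\P(\vec{f}) = \nu\,\P(-\Delta\vec{u})$ from \Cref{sec:Helmholtz}. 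This is exactly where pressure-robustness enters and is, to my mind, the crucial (and not entirely routine) observation: without the reconstruction one would be stuck with $\vec{f}$ itself and pick up a pressure-dependent term.

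Now I would split
\begin{equation*}
    E(\vec{w}_h)/\nu = \bigl[(\nabla_h \vec{u}, \nabla_h \vec{w}_h) - (\P(-\Delta\vec{u}), \vec{w}_h)\bigr] + (\P(-\Delta\vec{u}),\,\vec{w}_h - \RecOp\vec{w}_h).
\end{equation*}
The bracketed term is precisely the classical Crouzeix--Raviart consistency functional for the unit-viscosity Stokes problem with data $\P(-\Delta\vec{u}) \in \vec{L}^2(\Omega)$, whose solution by \Cref{lem:solution_change} is $(\vec{u}, \nu^{-1}(p-\phi))$. Since $\vec{w}_h \in \vec{V}_h^0$, \Cref{lem:ANS_consistency} applies (under the grading condition $\mu < \lambda$) and bounds it by $c\,h\,\norm{\vec{w}_h}_{1,h}\norm{\P(-\Delta\vec{u})}_0$. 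The remaining term is bounded by Cauchy--Schwarz together with the reconstruction estimate \eqref{eq:interpolation_error} from \Cref{as:RecOp}, yielding $c\,h\,\norm{\P(-\Delta\vec{u})}_0 \norm{\vec{w}_h}_{1,h}$.

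Combining these two bounds gives the required control of $E(\vec{w}_h)$, and assembling everything produces \eqref{eq:velocity_error}. The only genuinely delicate ingredient is that the constant in \eqref{eq:interpolation_error} and in \Cref{lem:ANS_consistency} must be independent of the mesh aspect ratio; both hold under the maximum angle condition satisfied by the tensor-product meshes described in \Cref{sec:discretization}, as noted in \Cref{rem:MAC} and the discussion following \Cref{as:RecOp}.
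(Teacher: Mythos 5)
Your proof is correct and follows essentially the same route as the paper's: the same Strang-type argument on $\vec{V}_h^0$, the same use of $\nabla\cdot(\RecOp\vec{w}_h)=0$ to kill the gradient part of $\vec{f}$, and the same split into the consistency term (handled via \Cref{lem:solution_change} and \Cref{lem:ANS_consistency}) plus the reconstruction term (handled via \eqref{eq:interpolation_error}). The only cosmetic differences are the sign convention for $\vec{w}_h$ and that you substitute $\P(\vec{f})=\nu\,\P(-\Delta\vec{u})$ at the function level early, while the paper keeps $\norm{\P(\vec{f})}_0$ and converts via \eqref{eq:projectoridentity} at the end.
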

\begin{proof}
	Let $\vec{w}_h = \vec{u}_h - \vec{v}_h \in \vec{V}_h^0$, for arbitrary $\vec{v}_h \in \vec{V}_h^0$. Then using the triangle inequality we estimate
	\begin{equation}\label{eq:proof1}
		\norm{\vec{u}-\vec{u}_h}_{1,h} = \norm{\vec{u}-\vec{v}_h-\vec{w}_h}_{1,h} \leq \norm{\vec{u}-\vec{v}_h}_{1,h} + \norm{\vec{w}_h}_{1,h}.
	\end{equation}
	Using \eqref{eq:stokeselliptic} we get 
	\begin{align*}
		\nu \norm{\vec{w}_h}_{1,h}^2 &= a_h(\vec{w}_h, \vec{w}_h) = a_h(\vec{u}_h - \vec{v}_h, \vec{w}_h) \\
		&= a_h(\vec{u} - \vec{v}_h, \vec{w}_h) - a_h(\vec{u}, \vec{w}_h)  + a_h(\vec{u}_h, \vec{w}_h)\\
		&= a_h(\vec{u} - \vec{v}_h, \vec{w}_h) - a_h(\vec{u}, \vec{w}_h) + (\vec{f}, \RecOp\vec{w}_h) \\
		&\leq \nu \norm{\vec{u}-\vec{v}_h}_{1,h} \norm{\vec{w}_h}_{1,h} + \abs{a_h(\vec{u}, \vec{w}_h) - (\vec{f}, \RecOp\vec{w}_h) },
	\end{align*}
	which combined with \eqref{eq:proof1} gives
	\begin{equation}\label{eq:proof2}
		\norm{\vec{u}-\vec{u}_h}_{1,h} \leq 2 \norm{\vec{u}-\vec{v}_h}_{1,h} + \nu^{-1} \frac{\abs{a_h(\vec{u}, \vec{w}_h) - (\vec{f}, \RecOp\vec{w}_h)}}{\norm{\vec{w}_h}_{1,h}}.
	\end{equation}
	 Denote the Helmholtz--Hodge decomposition of the data by $\vec{f} = \P(\vec{f}) + \nabla \phi$ and note that $\nabla\cdot\RecOp \vec{w}_h = 0$ due to \Cref{as:RecOp} and $\vec{w}_h \in \vec{V}_h^0$.
	Using $(\nabla \phi, \RecOp\vec{w}_h) = 0$, we get 
	\begin{align}
		\abs{a_h(\vec{u}, \vec{w}_h) - (\vec{f}, \RecOp\vec{w}_h)} &= \abs{a_h(\vec{u}, \vec{w}_h) - (\P(\vec{f}), \RecOp \vec{w}_h)} \nonumber\\
		&= |a_h(\vec{u}, \vec{w}_h) - (\P(\vec{f}), \vec{w}_h) + (\P(\vec{f}), \vec{w}_h - \RecOp \vec{w}_h) | \nonumber\\
		&\leq \abs{a_h(\vec{u}, \vec{w}_h) - (\P(\vec{f}), \vec{w}_h)} + \abs{(\P(\vec{f}), \vec{w}_h - \RecOp \vec{w}_h)}. \label{eq:proof3}
	\end{align}
	Due to \Cref{lem:solution_change} we can estimate the first term of \eqref{eq:proof3} using \Cref{lem:ANS_consistency} and get
	\begin{align}\label{eq:proof4}
		\abs{a_h(\vec{u}, \vec{w}_h) - (\P(\vec{f}), \vec{w}_h)}  &= \nu \abs{ (\nabla_h \vec{u}, \nabla_h \vec{w}_h) - (\nu^{-1} \P(\vec{f}), \vec{w}_h) } \nonumber\\
		&\leq c h \nu \norm{\vec{w}_h}_{1,h} \norm{\nu^{-1} \P(\vec{f})}_0 = c h \norm{\vec{w}_h}_{1,h} \norm{\P(\vec{f})}_0.
	\end{align}
	Using the Cauchy-Schwarz inequality and the interpolation error estimate for the operator $\RecOp$ from \Cref{as:RecOp} we estimate for the second term of \eqref{eq:proof3}
	\begin{equation}\label{eq:proof5}
		 \abs{(\P(\vec{f}), \vec{w}_h - \RecOp \vec{w}_h)} \leq c h \norm{\vec{w}_h}_{1,h} \norm{\P(\vec{f})}_0.
	\end{equation}
	Combining estimates \eqref{eq:proof4}, \eqref{eq:proof5} with \eqref{eq:proof3}, inserting the result in \eqref{eq:proof2}, using \eqref{eq:projectoridentity} and by seeing that $\vec{v}_h$ was chosen arbitrarily, we get the final estimate
	\begin{equation*}
		\norm{\vec{u}-\vec{u}_h}_{1,h} \leq 2 \inf_{\vec{v}_h \in \vec{V}_h^0} \norm{\vec{u}-\vec{v}_h}_{1,h} + c h \norm{\P(-\Delta \vec{u})}_0. \qedhere
	\end{equation*}
\end{proof}
The term for the approximation error can be easily bounded using known results:
\begin{corollary}\label{cor:velocity_order}
	With the assumptions from \Cref{th:velocity_error} the estimate
	\begin{equation*}
		\norm{\vec{u}-\vec{u}_h}_{1,h} \leq c h \norm{\P(-\Delta \vec{u})}_0.
	\end{equation*}
	holds.
\end{corollary}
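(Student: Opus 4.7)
The strategy is to use Theorem \ref{th:velocity_error} directly: the second term on its right-hand side is already of the desired form $c h \norm{\P(-\Delta\vec{u})}_0$, so it only remains to bound the infimum $\inf_{\vec{v}_h \in \vec{V}_h^0} \norm{\vec{u} - \vec{v}_h}_{1,h}$ by the same expression. The natural candidate is the standard Crouzeix--Raviart nodal interpolant of $\vec{u}$, which I denote provisionally by $I_{CR}\vec{u}$. Because its defining moments are facet averages, $I_{CR}$ commutes with the (broken) divergence in an appropriate sense; since $\nabla\cdot\vec{u}=0$ it follows that $\nabla_h\cdot(I_{CR}\vec{u})=0$ elementwise and hence $I_{CR}\vec{u}\in\vec{V}_h^0$. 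Thus the infimum is bounded by $\norm{\vec{u}-I_{CR}\vec{u}}_{1,h}$.

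For this interpolation error on the anisotropically graded mesh with $\mu<\lambda$, I would invoke the approximation estimates developed in \cite{ApelNicaiseSchoberl2001} (in the same spirit as \Cref{lem:ANS_consistency}). These estimates rely on the weighted-Sobolev regularity of the Stokes velocity that is quoted in \Cref{sec:stokes}: $\partial_z\vec{u}\in\vec{H}^1(\Omega)$ together with the $r^\lambda$-type behaviour in the cross-sectional variables, which under the grading condition $\mu<\lambda$ is precisely what compensates the anisotropic element sizes and yields the optimal $O(h)$ bound with constant proportional to the $\vec{L}^2$-norm of the driving datum.

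To obtain a pressure-robust constant, I would invoke \Cref{lem:solution_change}: the same $\vec{u}$ is also the velocity solution of the Stokes problem with unit viscosity and right-hand side $\nu^{-1}\P(\vec{f}) = \P(-\Delta\vec{u})$, which lies in $\vec{L}^2(\Omega)$ by \eqref{eq:projectoridentity}. Applying the interpolation/regularity estimate to this reformulated problem, rather than to the original one with datum $\vec{f}$ and viscosity $\nu$, gives
\begin{equation*}
	\norm{\vec{u}-I_{CR}\vec{u}}_{1,h} \leq c h \norm{\P(-\Delta\vec{u})}_0,
\end{equation*}
and substituting into \eqref{eq:velocity_error} closes the argument.

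The routine part is invoking the cited approximation theory; the only genuine subtlety, and the step I would write out most carefully, is the use of \Cref{lem:solution_change} to replace the apparent $\nu^{-1}\norm{\vec{f}}_0$ dependency by the velocity-equivalent quantity $\norm{\P(-\Delta\vec{u})}_0$. Without this reformulation one would only recover a non-pressure-robust estimate, so this is exactly where the gain of the modified method becomes visible at the level of the approximation term, matching the gain that the reconstruction operator $\RecOp$ already provides at the level of the consistency term inside \Cref{th:velocity_error}.
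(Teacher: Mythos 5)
Your argument is correct, and it reaches the paper's estimate through the same essential pivot: invoking \Cref{lem:solution_change} together with \eqref{eq:projectoridentity} to apply the anisotropic approximation theory of \cite{ApelNicaiseSchoberl2001} to the unit-viscosity problem with datum $\nu^{-1}\P(\vec{f}) = \P(-\Delta\vec{u}) \in \vec{L}^2(\Omega)$, which is indeed where the pressure-robust constant is won. Where you differ is in the treatment of the constrained infimum. The paper cites Lemma 8 of \cite{ApelKempfLinkeMerdon2020} to relax $\inf_{\vec{v}_h\in\vec{V}_h^0}\norm{\vec{u}-\vec{v}_h}_{1,h}$ to $2\inf_{\vec{v}_h\in\vec{X}_h}\norm{\vec{u}-\vec{v}_h}_{1,h}$ and then applies Lemma 3.2 of \cite{ApelNicaiseSchoberl2001} \emph{as stated}, since that lemma bounds precisely the unconstrained infimum. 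You instead exhibit an explicit member of $\vec{V}_h^0$, the facet-average Crouzeix--Raviart interpolant: your divergence argument is sound, since $\nabla_h\cdot(I_{CR}\vec{u})$ is piecewise constant and its mean on each $T$ equals that of $\nabla\cdot\vec{u}=0$ by the facet-moment property, and the same property gives $I_{CR}\vec{u}\in\vec{X}_h$ for $\vec{u}\in\vec{H}^1_0(\Omega)$. Your route buys a cleaner constant (no factor $2$) and makes the divergence-preservation mechanism explicit; its small cost is that the statement of Lemma 3.2 of \cite{ApelNicaiseSchoberl2001} no longer suffices verbatim --- you need the stronger fact that its proof realizes the bound with exactly this interpolant in the weighted Sobolev setting under the grading condition $\mu<\lambda$ (which it does), or else you must re-derive the anisotropic weighted interpolation estimate for $I_{CR}$ yourself. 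So the two proofs are equivalent in substance, differing only in whether the constraint is handled by citation of a relaxation lemma or by construction.
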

\begin{proof}
	Using Lemma 8 from \cite{ApelKempfLinkeMerdon2020} we get
	\begin{equation*}
		\inf_{\vec{v}_h \in \vec{V}_h^0} \norm{\vec{u}-\vec{v}_h}_{1,h} \leq 2\inf_{\vec{v}_h \in \vec{X}_h} \norm{\vec{u}-\vec{v}_h}_{1,h}.
	\end{equation*}
	By \Cref{lem:solution_change} $\vec{u}$ is also the velocity solution of the Stokes problem with unit viscosity and right hand side data $\nu^{-1}\P(\vec{f})$, and thus using Lemma 3.2 from \cite{ApelNicaiseSchoberl2001} and \eqref{eq:projectoridentity} we get 
	\begin{equation}\label{eq:approx_error_velocity}
		\inf_{\vec{v}_h \in \vec{V}_h^0} \norm{\vec{u}-\vec{v}_h}_{1,h} \leq 2\inf_{\vec{v}_h \in \vec{X}_h} \norm{\vec{u}-\vec{v}_h}_{1,h} \leq c h \norm{\P(-\Delta \vec{u})}_0,
	\end{equation}
	which combined with \eqref{eq:velocity_error} proves the statement.
\end{proof}

\begin{remark}\label{rem:data_exact_solution}
	Considering \Cref{lem:solution_change}, the relationship between the data $\vec{f}$ and the velocity solution $\vec{u}$ with regard to the viscosity parameter $\nu$ can be looked at from different points of view. 
	On the one hand, in \Cref{th:velocity_error} we establish a velocity error estimate in terms of the divergence-free part of the Laplacian of the exact velocity $\P(-\Delta\vec{u})$. 
	In this form, the estimate is pressure-robust, i.e. it does not depend on the irrotational part of the data, and it does not have an apparent dependence on the viscosity.
	If on the other hand by using \eqref{eq:projectoridentity} we would put the estimate in terms of the Helmholtz-Hodge projection $\P(\vec{f})$ of the data, it would still be pressure-robust, but we would see a dependence on $\nu^{-1}$.
	
	The difference is of interest for numerical examples and the information we want to extract from them. Consider e.g. the examples from \cite[Section 5]{Linke2014}. Here the exact velocity and pressure solutions are fixed, and the data function $\vec{f}$ changes when the viscosity parameter is adjusted due to the factor $\nu$ in front of the Laplacian. 
	This can nicely show the effect of pressure-robustness, since non-pressure-robust methods show a viscosity induced locking effect, i.e. the velocity error scales with $\nu^{-1}$, while pressure-robust methods do not, as the discrete velocity solution is the same for all values of $\nu$.
	If however the data function $\vec{f}$ is fixed, we also see a dependence on the viscosity in the error for pressure-robust methods, since the velocity solution now scales with $\nu^{-1}$. 
	When altering the viscosity parameter while using fixed data, pressure-robustness can still be observed by changing the irrotational part of $\vec{f}$, i.e. adding a gradient field, which has no effect on the numerical velocity solution of pressure-robust methods. 
\end{remark}

For the pressure error we get the following estimate.
\begin{proposition}
	With the assumptions of \Cref{th:velocity_error} we have the estimate
	\begin{equation}\label{eq:pressure_error}
		\norm{p-p_h}_{0} \leq  \inf_{q_h \in Q_h} \norm{p-q_h}_{0} + \frac{2 \nu}{\tilde{\beta}} \inf_{\vec{v}_h \in \vec{V}_h^0} \norm{\vec{u}-\vec{v}_h}_{1,h} + \frac{c h}{\tilde{\beta}}\norm{\vec{f}}_0.
	\end{equation}
\end{proposition}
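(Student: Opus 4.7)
The plan is as follows. I would not compare $p_h$ to an arbitrary $q_h \in Q_h$ (which would produce the coefficient $1 + \tilde{\beta}^{-1}$ in front of $\norm{p - q_h}_0$) but specifically to the $L^2$-orthogonal projection $P_h p \in Q_h$. The triangle inequality then gives
\begin{equation*}
  \norm{p - p_h}_0 \leq \norm{p - P_h p}_0 + \norm{P_h p - p_h}_0 = \inf_{q_h \in Q_h}\norm{p - q_h}_0 + \norm{P_h p - p_h}_0,
\end{equation*}
so that the first summand already delivers the stated leading term with coefficient exactly $1$, and the remaining task is to bound $\norm{P_h p - p_h}_0$ via the discrete inf-sup condition \eqref{eq:infsup}.

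For the inf-sup step, I pick $\vec{v}_h \in \vec{X}_h$ and rewrite $b_h(\vec{v}_h, P_h p - p_h)$. The crucial observation is that $\nabla_h \cdot \vec{v}_h$ lies in $Q_h$: it is piecewise constant by construction, and its integral over $\Omega$ vanishes because on each facet the Crouzeix--Raviart jump is linear and vanishes at the barycenter, which coincides with the facet centroid. Hence $L^2$-orthogonality of $P_h$ yields $b_h(\vec{v}_h, P_h p) = b_h(\vec{v}_h, p)$, and the discrete momentum equation rewrites $b_h(\vec{v}_h, p_h)$ as $(\vec{f}, \RecOp\vec{v}_h) - a_h(\vec{u}_h, \vec{v}_h)$, giving
\begin{equation*}
  b_h(\vec{v}_h, P_h p - p_h) = a_h(\vec{u}_h - \vec{u}, \vec{v}_h) + \bigl[a_h(\vec{u}, \vec{v}_h) + b_h(\vec{v}_h, p) - (\vec{f}, \RecOp \vec{v}_h)\bigr].
\end{equation*}
The first summand is bounded by $\nu\norm{\vec{u} - \vec{u}_h}_{1,h}\norm{\vec{v}_h}_{1,h}$ and then, via \Cref{th:velocity_error} together with $\nu\norm{\P(-\Delta\vec{u})}_0 \leq \norm{\vec{f}}_0$ from \eqref{eq:projectoridentity}, contributes exactly the term $\tfrac{2\nu}{\tilde{\beta}}\inf_{\vec{v}_h \in \vec{V}_h^0}\norm{\vec{u} - \vec{v}_h}_{1,h}$ plus a $ch\norm{\vec{f}}_0$ remainder.

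The bracketed consistency error is where the modified right-hand side comes in; I split it as
\begin{equation*}
  \bigl[a_h(\vec{u}, \vec{v}_h) + b_h(\vec{v}_h, p) - (\vec{f}, \vec{v}_h)\bigr] + (\vec{f}, \vec{v}_h - \RecOp \vec{v}_h).
\end{equation*}
The first bracket is the classical consistency error: rescaling the Stokes system $(\vec{u}, p, \vec{f}, \nu) \mapsto (\vec{u}, p/\nu, \vec{f}/\nu, 1)$ reduces it to \Cref{lem:ANS_consistency}, and reintroducing $\nu$ gives $ch\norm{\vec{v}_h}_{1,h}\norm{\vec{f}}_0$. The second term is handled by Cauchy--Schwarz and the interpolation bound \eqref{eq:interpolation_error} of \Cref{as:RecOp}, also giving $ch\norm{\vec{v}_h}_{1,h}\norm{\vec{f}}_0$. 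Dividing by $\norm{\vec{v}_h}_{1,h}$, taking the supremum, invoking \eqref{eq:infsup} and assembling the pieces yields the estimate.

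The main obstacle is the bookkeeping needed to obtain the coefficient $1$ on $\inf_{q_h}\norm{p - q_h}_0$: this forces the specific choice $q_h = P_h p$ and relies on the inclusion $\nabla_h \cdot \vec{X}_h \subset Q_h$, without which only the weaker $(1+\tilde{\beta}^{-1})$-bound would follow. The secondary technical point is isolating the reconstruction defect $\vec{v}_h - \RecOp\vec{v}_h$ so that the classical consistency lemma still applies to the $(\vec{f}, \vec{v}_h)$ part.
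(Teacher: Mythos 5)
Your proposal is correct and takes essentially the same route as the paper's proof: the $L^2$-projection $\pi_h p$ with a triangle inequality, the inf-sup condition \eqref{eq:infsup} combined with the orthogonality from $\nabla_h\cdot\vec{X}_h\subset Q_h$, the discrete momentum equation, the rescaled consistency estimate of \Cref{lem:ANS_consistency} (justified exactly as in \Cref{lem:solution_change}), the reconstruction bound \eqref{eq:interpolation_error}, and finally \Cref{th:velocity_error} together with \eqref{eq:projectoridentity}. The only deviation is cosmetic bookkeeping in how the three contributions are grouped before taking the supremum, plus your (correct) extra justification that $\nabla_h\cdot\vec{v}_h$ has zero mean, which the paper asserts without detail.
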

\begin{proof}
	Let $\pi_h:L^2_0(\Omega) \rightarrow Q_h$ be the $L^2$-orthogonal projection onto the discrete pressure space. We start with a triangle inequality, which gives
	\begin{equation*}
		\norm{p-p_h}_{0} \leq \norm{p-\pi_h p}_{0} + \norm{\pi_h p - p_h}_{0},
	\end{equation*}
	where we see that for the first term it holds $\norm{p-\pi_h p}_{0} = \inf_{q_h \in Q_h} \norm{p-q_h}_{0}$. Because of $\pi_h p - p_h \in Q_h$, we can use the inf-sup condition \eqref{eq:infsup} and estimate
	\begin{equation*}
		\norm{\pi_h p - p_h}_{0} \leq \frac{1}{\tilde{\beta}} \sup_{\vec{v}_h\in \vec{X}_h} \frac{b_h(\vec{v}_h,\pi_h p - p_h)}{\norm{\vec{v}_h}_{1,h}} = \frac{1}{\tilde{\beta}} \sup_{\vec{v}_h\in \vec{X}_h} \frac{b_h(\vec{v}_h,\pi_h p - p) + b_h(\vec{v}_h, p - p_h)}{\norm{\vec{v}_h}_{1,h}},
	\end{equation*}
	where $b_h(\vec{v}_h,\pi_h p - p) = 0$, as $\nabla_h \cdot \vec{v}_h \in Q_h$ and $\pi_h p - p \in Q_h^{\perp_{L^2}}$. Since $p_h$ is the discrete pressure solution of \eqref{eq:stokesweak} we can further calculate
	\begin{align*}
		\norm{\pi_h p - p_h}_{0} &\leq \frac{1}{\tilde{\beta}} \sup_{\vec{v}_h\in \vec{X}_h} \frac{b_h(\vec{v}_h,p)+a_h(\vec{u}_h,\vec{v}_h) - (\vec{f}, \RecOp \vec{v}_h)}{\norm{\vec{v}_h}_{1,h}} \\
		&= \frac{\nu}{\tilde{\beta}} \left(\sup_{\vec{v}_h\in \vec{X}_h} \frac{(\nabla_h \vec{u},\nabla_h \vec{v}_h) + b_h(\vec{v}_h,\nu^{-1}p) - (\nu^{-1}\vec{f}, \vec{v}_h) }{\norm{\vec{v}_h}_{1,h}}\right. \\
		&\left.\hspace{2cm}+ \sup_{\vec{v}_h\in \vec{X}_h} \frac{(\nabla_h (\vec{u}_h-\vec{u}),\nabla_h \vec{v}_h) + (\nu^{-1}\vec{f},\vec{v}_h - \RecOp \vec{v}_h)}{\norm{\vec{v}_h}_{1,h}} \right)\\
		&\leq \frac{\nu}{\tilde{\beta}} \left( \norm{\vec{u}-\vec{u}_h}_{1,h} + c h \norm{\nu^{-1} \vec{f}}_0 \right),
	\end{align*}
	where in the last step we used \Cref{lem:ANS_consistency}, the Cauchy-Schwarz inequality and the interpolation error estimate \eqref{eq:interpolation_error} for the reconstruction operator. Now combining the estimates, using \Cref{th:velocity_error} and \eqref{eq:projectoridentity} yields the desired inequality.
\end{proof}

\begin{corollary}
	With the assumptions of \Cref{th:velocity_error} we have the estimate
	\begin{equation*}
		\norm{p-p_h}_{0} \leq  c h \left(1+\frac{1}{\tilde{\beta}}\right) \norm{\vec{f}}_0.
	\end{equation*}
\end{corollary}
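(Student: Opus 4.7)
The plan is to start from the proposition estimate \eqref{eq:pressure_error}, which splits the pressure error into three pieces: the $L^2$ best pressure approximation, the scaled velocity best approximation in $\vec{V}_h^0$, and a term $(ch/\tilde{\beta})\norm{\vec{f}}_0$ that is already of the desired form. It therefore suffices to bound each of the first two pieces by $c h\norm{\vec{f}}_0$, with the appropriate $1/\tilde{\beta}$ factor, and then sum.

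For the velocity piece I would reuse the chain of inequalities \eqref{eq:approx_error_velocity} derived inside the proof of \Cref{cor:velocity_order}, namely $\inf_{\vec{v}_h\in\vec{V}_h^0}\norm{\vec{u}-\vec{v}_h}_{1,h}\le c h\norm{\P(-\Delta\vec{u})}_0$, and then apply \eqref{eq:projectoridentity}. This yields $\tfrac{2\nu}{\tilde{\beta}}\inf_{\vec{v}_h\in\vec{V}_h^0}\norm{\vec{u}-\vec{v}_h}_{1,h}\le \tfrac{2ch}{\tilde{\beta}}\norm{\vec{f}}_0$; the factor $\nu$ cancels exactly because the velocity approximation estimate picks up $\nu^{-1}$ when \Cref{lem:solution_change} is used to pass to the unit-viscosity problem with data $\nu^{-1}\P(\vec{f})$.

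For the pressure best approximation I would take $q_h$ to be the $L^2$-projection $\pi_h p$ onto piecewise constants and invoke the anisotropic $L^2$-approximation estimate for $\pi_h$ on the graded tensor-product mesh with $\mu<\lambda$, combined with the weighted-Sobolev regularity of the Stokes pressure from \cite{ApelNicaiseSchoberl2001,Dauge1989}. Together these yield $\inf_{q_h\in Q_h}\norm{p-q_h}_0\le c h\norm{\vec{f}}_0$, the pressure analog of \cite[Lemma~3.2]{ApelNicaiseSchoberl2001}. Summing the three contributions and collecting the $1/\tilde{\beta}$ factors produces exactly $c h(1+1/\tilde{\beta})\norm{\vec{f}}_0$.

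The main obstacle is the pressure best-approximation step: the pressure inherits the edge singularity, has only reduced regularity in isotropic Sobolev scales, and its higher-order behavior is controlled only in weighted spaces with exponent governed by $\lambda$. Consequently the optimal $O(h)$ rate would fail on a quasi-uniform mesh, and the grading condition $\mu<\lambda$ has to be fully exploited through the anisotropic interpolation machinery. The velocity term and the residual $(ch/\tilde{\beta})\norm{\vec{f}}_0$ term, by contrast, are immediate consequences of results already proved earlier in the excerpt.
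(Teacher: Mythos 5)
Your proposal is correct and matches the paper's proof: the paper likewise starts from \eqref{eq:pressure_error}, bounds the velocity term via \eqref{eq:approx_error_velocity} together with \eqref{eq:projectoridentity}, and handles the pressure best-approximation term by citing \cite[Lemma~3.2]{ApelNicaiseSchoberl2001}. The only cosmetic difference is that you sketch re-deriving that pressure approximation bound from weighted regularity and anisotropic interpolation on the graded mesh, whereas the cited lemma already contains exactly this estimate (it is not merely an ``analog''), so no new argument is needed there.
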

\begin{proof}
	The estimate is obtained from \eqref{eq:pressure_error} by using \cite[Lemma 3.2]{ApelNicaiseSchoberl2001} for the first term and \eqref{eq:approx_error_velocity} in combination with \eqref{eq:projectoridentity} for the second term.
\end{proof}

\begin{remark}
	We consider only the three dimensional case, since the focus of this paper is on anisotropic elements.
	The main results are nevertheless valid for the corresponding two-dimensional problem in a domain with a re-entrant corner, as long as adequate local mesh grading near the corner, as described in the first part of \Cref{sec:discretization}, is applied.
	
	The proofs for the intermediate results from \cite{ApelNicaiseSchoberl2001:2} can be adapted to fit the two-dimensional setting. 
	With them, the consistency error for the standard method, see \Cref{lem:ANS_consistency}, can be proved analogously to the first part of the proof of Lemma 3.2 in \cite{ApelNicaiseSchoberl2001}, without the additional difficulty for the third component. From there, our proofs in this section apply analogously. 
\end{remark}

\section{Numerical Examples}\label{sec:example}
With the following two examples we show the performance of the pressure-robust modified Crouzeix--Raviart method with Raviart--Thomas (CR-RT) and Brezzi--Douglas--Marini (CR-BDM) reconstruction compared to the standard Crouzeix--Raviart (CR) method on anisotropically graded meshes. 
Considering \Cref{rem:data_exact_solution}, we first choose the approach of fixing an exact solution, where the data changes when altering the viscosity. 
However, for our specified exact solution we get $\vec{f} \notin \vec{L}^2(\Omega)$ for $\nu \neq 1$, which does not comply with the assumptions of our theory.
Thus for the second example we use the other approach, where the divergence free part $\P(\vec{f})$ of the data is fixed and only the irrotational part of $\vec{f}$ is changed in order to show pressure-robustness.

\subsection{Example with fixed exact solution} 
Consider the inhomogeneous Stokes problem, i.e. problem \eqref{eq:stokes} with boundary condition $\vec{u} = \vec{g}$ on $\partial \Omega$, on the domain
\begin{equation*}
	\Omega = \{(r\cos(\varphi), r\sin(\varphi), z) \in \R^3: 0<r<1, 0<\varphi<\omega, 0<z<1\}, 
\end{equation*}
where $\omega = \frac{3\pi}{2}$. The results below show that the change to inhomogeneous boundary conditions does not impact the performance of the numerical method. We use the exact velocity and exact pressure solutions defined by
\begin{align*}
	\vec{u} &= 	\begin{pmatrix}
					z r^\lambda [-\lambda\sin(\varphi)\cos(\lambda(\omega-\varphi)+\varphi)+\lambda\sin(\omega-\varphi)\cos(\lambda\varphi-\varphi)+\sin(\lambda(\omega-\varphi))] \\
					z r^\lambda [\sin(\lambda\varphi)-\lambda\sin(\varphi)\sin(\lambda(\omega-\varphi)+\varphi)-\lambda\sin(\omega-\varphi)\sin(\lambda\varphi-\varphi)] \\
					r^{\nicefrac{2}{3}}\sin\left(\frac{2}{3}\varphi\right)
				\end{pmatrix}, \\
	p &= z r^{\lambda-1} \Phi(\varphi),
\end{align*}
where we use 
\begin{equation}\label{eq:Phi}
	\Phi(\varphi) = 2 \lambda [\sin(\omega+(\lambda-1)\varphi) - \sin(\lambda\omega-(\lambda-1)\varphi)].
\end{equation}
From \eqref{eq:lambda} we get $\lambda \approx 0.54448$. The velocity solution and the singular nature of the exact pressure along the edge at $r=0$ are illustrated in \Cref{fig:exact_sol}.
\begin{figure}[t]
	\centering
	\includegraphics[scale=1]{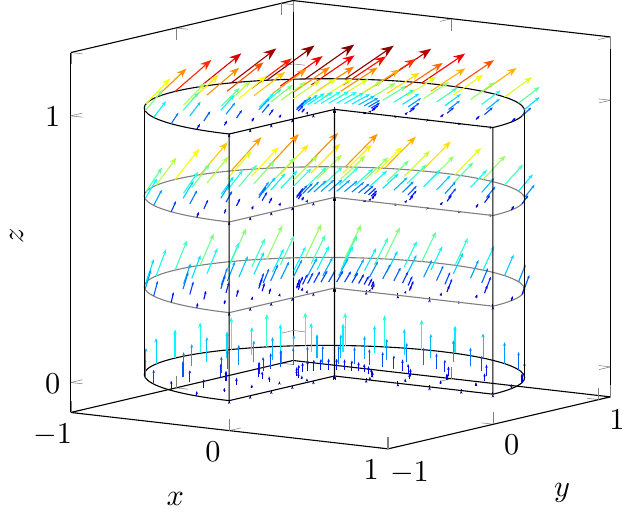}
	\hfill
	\includegraphics[scale=1]{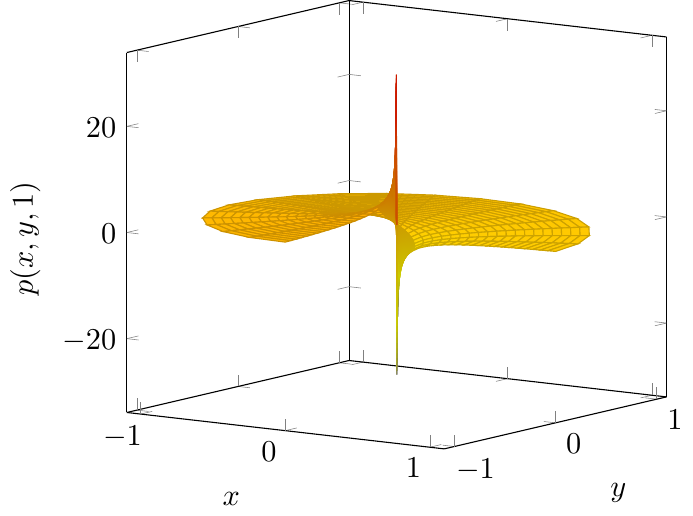}
	\caption{Plot of exact velocity $\vec{u}(x,y,z)$ and exact pressure $p(x,y,1)$}
	\label{fig:exact_sol}
\end{figure}
The data function $\vec{f}$ for the numerical calculations is obtained by evaluating \eqref{eq:stokes_impulse}, from which we get
\begin{equation}\label{eq:datafunction}
	\vec{f} = 	\begin{pmatrix}
					2 \lambda (\lambda -1) (\nu - 1) z r^{\lambda - 2} \left[ \sin(\lambda \omega - (\lambda - 2) \varphi) - \sin(\omega + (\lambda - 2) \varphi) \right] \\
					2 \lambda (\lambda -1) (1 - \nu) z r^{\lambda - 2} \left[ \cos(\lambda \omega - (\lambda - 2) \varphi) + \cos(\omega + (\lambda - 2) \varphi) \right] \\
					r^{\lambda-1} \Phi(\varphi)
				\end{pmatrix},
\end{equation}
where $f_1 = f_2 = 0$ for $\nu = 1$.

In \cite{ApelKempfLinkeMerdon2020} this example was used to show that the modified Crouzeix--Raviart method can be used for anisotropic meshes. 
However, no theoretical foundation for the numerical results was given, since due to the low regularity of the solution in this example, i.e. $(\vec{u},p) \notin \vec{H}^2(\Omega) \times H^1(\Omega)$, $\Delta\vec{u}\notin \vec{L}^2(\Omega)$, the results from \cite{ApelKempfLinkeMerdon2020} are not directly applicable. 
This gap in the theory is closed by this contribution, at least for the case $\nu=1$ where $\vec{f}\in\vec{L}^2(\Omega)$.

As mentioned in \Cref{sec:Helmholtz} we know that $\P(-\Delta\vec{u})\in \vec{L}^2(\Omega)$, since for $\nu = 1$ by \cite[Theorem 2.1]{ApelNicaiseSchoberl2001} it holds $\partial_z p\in L^2(\Omega)$ and thus the data function $\vec{f} = (0,0,\partial_z p)$ is in $\vec{L}^2(\Omega)$. 
As $\vec{u}$ is fixed, this does not change for other values of $\nu$, even though $\vec{f}$ is no longer in $\vec{L}^2(\Omega)$ for $\nu\neq 1$.
\begin{table}[t]
	\centering
	\caption{Errors and experimental convergence orders of the standard and modified Crouzeix--Raviart methods on uniform and graded meshes, $\nu=1$}
	\label{tab:errors1_standard}
	\begin{footnotesize}
		\begin{tabular}{@{}rllllllll@{}}
			\toprule
			\textbf{CR\hphantom{-BDM}}	& \multicolumn{4}{c}{$\mu = 1$} & \multicolumn{4}{c}{$\mu = 0.4$} \\ \cmidrule(lr){2-5} \cmidrule(l){6-9}
			ndof 	& $\norm{\vec{u}-\vec{u}_h}_{1,h}$	& eoc	& $\norm{p-p_h}_0$	& eoc & $\norm{\vec{u}-\vec{u}_h}_{1,h}$	& eoc	& $\norm{p-p_h}_0$	& eoc\\ \cmidrule(r){1-1} \cmidrule(lr){2-5} \cmidrule(l){6-9}
			894 &	6.8435e--01 &	 &	7.5632e--01 &	 &	6.9908e--01 &	 &	7.1907e--01 &	\\
			4137 &	5.1119e--01 &	0.68 &	5.8913e--01 &	0.58 &	4.8222e--01 &	0.73 &	4.3157e--01 &	1.00\\
			25650 &	3.5956e--01 &	0.58 &	3.6946e--01 &	0.77 &	2.9154e--01 &	0.83 &	2.1233e--01 &	1.17\\
			155364 &	2.3669e--01 &	0.69 &	2.0833e--01 &	0.95 &	1.6660e--01 &	0.93 &	9.9137e--02 &	1.27\\
			1376733 &	1.6167e--01 &	0.52 &	1.3838e--01 &	0.56 &	8.2279e--02 &	0.97 &	4.4658e--02 &	1.10\\ \bottomrule
		\end{tabular}
	\end{footnotesize}
	
	\vspace{1ex}
	\label{tab:errors1_rt}
	\begin{footnotesize}
		\begin{tabular}{@{}rllllllll@{}}
			\toprule
			\textbf{CR-RT}\hspace{1.2em}	& \multicolumn{4}{c}{$\mu = 1$} & \multicolumn{4}{c}{$\mu = 0.4$} \\ \cmidrule(lr){2-5} \cmidrule(l){6-9}
			ndof 	& $\norm{\vec{u}-\vec{u}_h}_{1,h}$	& eoc	& $\norm{p-p_h}_0$	& eoc & $\norm{\vec{u}-\vec{u}_h}_{1,h}$	& eoc	& $\norm{p-p_h}_0$	& eoc\\ \cmidrule(r){1-1} \cmidrule(lr){2-5} \cmidrule(l){6-9}
			894 &	6.6119e--01 &	 &	6.9348e--01 &	 &	6.6847e--01 &	 &	6.8824e--01 &	\\
			4137 &	4.9119e--01 &	0.69 &	5.4644e--01 &	0.56 &	4.5698e--01 &	0.74 &	4.0627e--01 &	1.03\\
			25650 &	3.4653e--01 &	0.57 &	3.5565e--01 &	0.71 &	2.7329e--01 &	0.85 &	1.9630e--01 &	1.20\\
			155364 &	2.3041e--01 &	0.68 &	2.0405e--01 &	0.92 &	1.5591e--01 &	0.93 &	9.3436e--02 &	1.24\\
			1376733 &	1.5921e--01 &	0.51 &	1.3745e--01 &	0.54 &	7.6745e--02 &	0.97 &	4.3269e--02 &	1.06\\ \bottomrule
		\end{tabular}
	\end{footnotesize}
	
	\vspace{1ex}
	\label{tab:errors1_bdm}
	\begin{footnotesize}
		\begin{tabular}{@{}rllllllll@{}}
			\toprule
			\textbf{CR-BDM}	& \multicolumn{4}{c}{$\mu = 1$} & \multicolumn{4}{c}{$\mu = 0.4$} \\ \cmidrule(lr){2-5} \cmidrule(l){6-9}
			ndof 	& $\norm{\vec{u}-\vec{u}_h}_{1,h}$	& eoc	& $\norm{p-p_h}_0$	& eoc & $\norm{\vec{u}-\vec{u}_h}_{1,h}$	& eoc	& $\norm{p-p_h}_0$	& eoc\\ \cmidrule(r){1-1} \cmidrule(lr){2-5} \cmidrule(l){6-9}
			894 &	6.6057e--01 &	 &	6.8820e--01 &	 &	6.6909e--01 &	 &	7.0451e--01 &	\\
			4137 &	4.9002e--01 &	0.70 &	5.4862e--01 &	0.53 &	4.5764e--01 &	0.74 &	4.1090e--01 &	1.06\\
			25650 &	3.4568e--01 &	0.57 &	3.5646e--01 &	0.71 &	2.7437e--01 &	0.84 &	1.9731e--01 &	1.21\\
			155364 &	2.3000e--01 &	0.68 &	2.0450e--01 &	0.92 &	1.5679e--01 &	0.93 &	9.3566e--02 &	1.24\\
			1376733 &	1.5905e--01 &	0.50 &	1.3758e--01 &	0.54 &	7.7344e--02 &	0.97 &	4.3282e--02 &	1.06\\ \bottomrule
		\end{tabular}
	\end{footnotesize}
\end{table}

\begin{table}[t]
	\centering
	\caption{Errors and experimental convergence orders of the standard and modified Crouzeix--Raviart methods on uniform and graded meshes, $\nu=10^{-1}$}
	\label{tab:errors2_standard}
	\begin{footnotesize}
		\begin{tabular}{@{}rllllllll@{}}
			\toprule
			\textbf{CR}\hspace{2.65em}	& \multicolumn{4}{c}{$\mu = 1$} & \multicolumn{4}{c}{$\mu = 0.4$} \\ \cmidrule(lr){2-5} \cmidrule(l){6-9}
			ndof 	& $\norm{\vec{u}-\vec{u}_h}_{1,h}$	& eoc	& $\norm{p-p_h}_0$	& eoc & $\norm{\vec{u}-\vec{u}_h}_{1,h}$	& eoc	& $\norm{p-p_h}_0$	& eoc\\ \cmidrule(r){1-1} \cmidrule(lr){2-5} \cmidrule(l){6-9}
			894 &	3.2893e+00 &	 &	4.7738e--01 &	 &	3.2855e+00 &	 &	4.6523e--01 &	\\
			4137 &	2.6582e+00 &	0.50 &	3.7183e--01 &	0.58 &	2.6933e+00 &	0.39 &	3.2386e--01 &	0.71\\
			25650 &	1.8748e+00 &	0.57 &	2.4716e--01 &	0.67 &	1.8576e+00 &	0.61 &	1.7577e--01 &	1.00\\
			155364 &	1.3170e+00 &	0.59 &	1.4109e--01 &	0.93 &	1.4685e+00 &	0.39 &	8.6913e--02 &	1.17\\
			1376733 &	8.5362e--01 &	0.59 &	9.9609e--02 &	0.48 &	1.6304e+00 &	--0.14 &	3.9505e--02 &	1.08\\ \bottomrule
		\end{tabular}
	\end{footnotesize}
	
	\vspace{1ex}
	\label{tab:errors2_rt}
	\begin{footnotesize}
		\begin{tabular}{@{}rllllllll@{}}
			\toprule
			\textbf{CR-RT}\hspace{1.2em}	& \multicolumn{4}{c}{$\mu = 1$} & \multicolumn{4}{c}{$\mu = 0.4$} \\ \cmidrule(lr){2-5} \cmidrule(l){6-9}
			ndof 	& $\norm{\vec{u}-\vec{u}_h}_{1,h}$	& eoc	& $\norm{p-p_h}_0$	& eoc & $\norm{\vec{u}-\vec{u}_h}_{1,h}$	& eoc	& $\norm{p-p_h}_0$	& eoc\\ \cmidrule(r){1-1} \cmidrule(lr){2-5} \cmidrule(l){6-9}
			894 &	6.6352e--01 &	 &	3.7503e--01 &	 &	6.6805e--01 &	 &	3.7467e--01 &	\\
			4137 &	4.9366e--01 &	0.69 &	2.8017e--01 &	0.68 &	4.5699e--01 &	0.74 &	2.4133e--01 &	0.86\\
			25650 &	3.5092e--01 &	0.56 &	1.9315e--01 &	0.61 &	2.7355e--01 &	0.84 &	1.3218e--01 &	0.99\\
			155364 &	2.3511e--01 &	0.67 &	1.2179e--01 &	0.77 &	1.5605e--01 &	0.93 &	7.0937e--02 &	1.04\\
			1376733 &	1.6308e--01 &	0.50 &	8.3975e--02 &	0.51 &	7.6794e--02 &	0.97 &	3.4700e--02 &	0.98\\ \bottomrule
		\end{tabular}
	\end{footnotesize}
	
	\vspace{1ex}
	\label{tab:errors2_bdm}
	\begin{footnotesize}
		\begin{tabular}{@{}rllllllll@{}}
			\toprule
			\textbf{CR-BDM}	& \multicolumn{4}{c}{$\mu = 1$} & \multicolumn{4}{c}{$\mu = 0.4$} \\ \cmidrule(lr){2-5} \cmidrule(l){6-9}
			ndof 	& $\norm{\vec{u}-\vec{u}_h}_{1,h}$	& eoc	& $\norm{p-p_h}_0$	& eoc & $\norm{\vec{u}-\vec{u}_h}_{1,h}$	& eoc	& $\norm{p-p_h}_0$	& eoc\\ \cmidrule(r){1-1} \cmidrule(lr){2-5} \cmidrule(l){6-9}
			894 &	6.6720e--01 &	 &	3.7487e--01 &	 &	6.6963e--01 &	 &	3.7481e--01 &	\\
			4137 &	4.9503e--01 &	0.70 &	2.8009e--01 &	0.68 &	4.5775e--01 &	0.74 &	2.4137e--01 &	0.86\\
			25650 &	3.5253e--01 &	0.56 &	1.9308e--01 &	0.61 &	2.7426e--01 &	0.84 &	1.3218e--01 &	0.99\\
			155364 &	2.3649e--01 &	0.66 &	1.2175e--01 &	0.77 &	1.5646e--01 &	0.93 &	7.0934e--02 &	1.04\\
			1376733 &	1.6359e--01 &	0.50 &	8.3949e--02 &	0.51 &	7.8260e--02 &	0.95 &	3.4699e--02 &	0.98\\ \bottomrule
		\end{tabular}
	\end{footnotesize}
\end{table}

The calculations were performed with parameter values $\nu \in \{10^{-1},1\}$ and $\mu \in \{0.4, 1\}$. 
Tables \ref{tab:errors1_standard} and \ref{tab:errors2_standard} contain the computed errors. Comparing the estimated order of convergence ($eoc$) for meshes without grading, $\mu=1$, and with grading towards the edge, $\mu=0.4$, shows that anisotropic grading recovers the optimal convergence rate for all methods. 
The results with viscosity $\nu = 10^{-1}$ show the pressure-robustness of the modified method, as the absolute value of the velocity error does not depend on $\nu$, contrary to the standard method. 
The modified method seems to perform optimally in the anisotropic setting even with the low regularity data in the case $\nu\neq 1$, where the optimal convergence rate could not be observed with the standard method.

\begin{remark}
	The data function \eqref{eq:datafunction} is not in $\vec{L}^2(\Omega)$ for $\nu \neq 1$, but the right hand side integrals for our methods are still finite. 
	However, in order to produce the shown results in \Cref{tab:errors2_standard} the numerical quadrature for the linear form had to be highly accurate.
	For our CR-BDM calculations, additionally to choosing a high quadrature degree as for the other methods, we used local mesh refinement near the singular axis.
\end{remark}
\begin{remark}
	The quadrature procedure described in the last remark did not improve the convergence results of the standard method on graded meshes with $\nu\neq 1$, where the optimal rate could not be observed. Although we do not have a detailed proof, this seems to be a result of $\vec{f} \notin \vec{L}^2(\Omega)$:
	
	The velocity error estimate from \cite{ApelNicaiseSchoberl2001} for the standard method, which is shown for $\vec{f}\in \vec{L}^2(\Omega)$, comprises the consistency error and the best approximation error, the latter being bounded in terms of the interpolation error of the Crouzeix--Raviart interpolation. While we could see the interpolation error in this test converging optimally on the graded meshes, the consistency error does not seem to converge for irregular data. In contrast to the standard Crouzeix--Raviart method, the proof of our pressure robust estimate from \Cref{sec:error} only needs to bound the consistency error for the Helmholtz--Hodge projection of the data, which, for this example, is in $\vec{L}^2(\Omega)$. This is the reason why the modified methods work.
	
	Since the consistency error estimate from \cite{ApelNicaiseSchoberl2001} was prepared in \cite{ApelNicaiseSchoberl2001:2} with a similar estimate for the Poisson equation, we did a further test computation for the Poisson problem with exact solution $u = r^{\nicefrac{1}{2}} \sin(\nicefrac{2}{3}\varphi)$ on the same meshes. For this exact solution the data is not in $L^2(\Omega)$ as in the Stokes case, and the results showed a similar convergence behavior as the Stokes example. This is another indication that the consistency error of the Crouzeix--Raviart method causes the bad numerical performance.
\end{remark}

\subsection{Example with fixed data}
Consider the same general setting as in the previous example. We now use the data
\begin{equation*}
	\vec{f} = 	\vec{f}_0 + \nabla \phi_i,\quad i\in\{1,2\},
\end{equation*}
where $\vec{f}_0$ and $\phi_i$ are chosen as 
\begin{align*}
	&\vec{f}_0 = \begin{pmatrix}
					0\\
					0\\
					r^{\lambda-1} \Phi(\varphi)
				\end{pmatrix}, 
	&&\phi_1 = 0, &&\phi_2 = 10 r^{\lambda} \Phi(\varphi),
\end{align*}
with $\Phi(\varphi)$ from \eqref{eq:Phi}.
The function $\vec{f}_0$ is aquired by setting $\nu = 1$ in \eqref{eq:datafunction} and the functions $\phi_i$ are used to show the pressure-robustness in the case of a scaled exact velocity solution: the errors for the CR-RT and CR-BDM methods do not change when adding gradient fields to the data. The exact solutions for the convergence analysis can be deduced from the first example using the considerations from \Cref{lem:solution_change} and \Cref{rem:data_exact_solution}.

As before we have $-\Delta\vec{u}\notin \vec{L}^2(\Omega)$, but due to our choice of the functions $\vec{f}_0$ and $\phi_i$ we now get $\vec{f}\in \vec{L}^2(\Omega)$ for all calculations. 

The calculations were performed with viscosity parameter $\nu \in \{10^{-3},1\}$ and, since the difference in convergence orders was already demonstrated in the previous example, we only use anisotropic meshes with grading parameter $\mu = 0.4$. 

\begin{table}[t]
	\centering
	\caption{Errors and experimental convergence orders of the standard and modified Crouzeix--Raviart methods on graded meshes with $\mu = 0.4$, $\nu=1$}
	\label{tab:errors_standard_alt}
	\begin{footnotesize}
		\begin{tabular}{@{}rllllllll@{}}
			\toprule
			\textbf{CR}\hphantom{\textbf{-BDM}}	& \multicolumn{4}{c}{$\phi_1$} & \multicolumn{4}{c}{$\phi_2$} \\ \cmidrule(lr){2-5} \cmidrule(l){6-9}
			ndof 	& $\norm{\vec{u}-\vec{u}_h}_{1,h}$	& eoc	& $\norm{p-p_h}_0$	& eoc & $\norm{\vec{u}-\vec{u}_h}_{1,h}$	& eoc	& $\norm{p-p_h}_0$	& eoc\\ \cmidrule(r){1-1} \cmidrule(lr){2-5} \cmidrule(l){6-9}
			894 &	6.9908e--01 &	 &	7.1907e--01 &	 &	1.6311e+00 &	 &	4.7159e+00 &	\\
			4137 &	4.8222e--01 &	0.73 &	4.3157e--01 &	1.00 &	1.2720e+00 &	0.49 &	2.8249e+00 &	1.00\\
			25650 &	2.9154e--01 &	0.83 &	2.1233e--01 &	1.17 &	8.3565e--01 &	0.69 &	1.2864e+00 &	1.29\\
			155364 &	1.6660e--01 &	0.93 &	9.9137e--02 &	1.27 &	5.3725e--01 &	0.74 &	5.8967e--01 &	1.30\\
			1376733 &	8.2279e--02 &	0.97 &	4.4658e--02 &	1.10 &	2.6864e--01 &	0.95 &	2.1364e--01 &	1.40\\ \bottomrule
		\end{tabular}
	\end{footnotesize}
	
	\vspace{1ex}
	\label{tab:errors_rt_alt}
	\begin{footnotesize}
		\begin{tabular}{@{}rllllllll@{}}
			\toprule
			\textbf{CR-RT}\hspace{1.2em}	& \multicolumn{4}{c}{$\phi_1$} & \multicolumn{4}{c}{$\phi_2$} \\ \cmidrule(lr){2-5} \cmidrule(l){6-9}
			ndof 	& $\norm{\vec{u}-\vec{u}_h}_{1,h}$	& eoc	& $\norm{p-p_h}_0$	& eoc & $\norm{\vec{u}-\vec{u}_h}_{1,h}$	& eoc	& $\norm{p-p_h}_0$	& eoc\\ \cmidrule(r){1-1} \cmidrule(lr){2-5} \cmidrule(l){6-9}
			894 &	6.6847e--01 &	 &	6.8824e--01 &	 &	6.6846e--01 &	 &	2.0599e+00 &	\\
			4137 &	4.5698e--01 &	0.74 &	4.0628e--01 &	1.03 &	4.5698e--01 &	0.74 &	1.2274e+00 &	1.01\\
			25650 &	2.7329e--01 &	0.75 &	1.9630e--01 &	1.20 &	2.7329e--01 &	0.85 &	6.5688e--01 &	1.03\\
			155364 &	1.5591e--01 &	0.93 &	9.3437e--02 &	1.24 &	1.5591e--01 &	0.93 &	3.6601e--01 &	0.97\\
			1376733 &	7.6745e--02 &	0.97 &	4.3269e--02 &	1.06 &	7.6745e--02 &	0.97 &	1.6909e--01 &	1.06\\ \bottomrule
		\end{tabular}
	\end{footnotesize}
	
	\vspace{1ex}
	\label{tab:errors_bdm_alt}
	\begin{footnotesize}
		\begin{tabular}{@{}rllllllll@{}}
			\toprule
			\textbf{CR-BDM}	& \multicolumn{4}{c}{$\phi_1$} & \multicolumn{4}{c}{$\phi_2$} \\ \cmidrule(lr){2-5} \cmidrule(l){6-9}
			ndof 	& $\norm{\vec{u}-\vec{u}_h}_{1,h}$	& eoc	& $\norm{p-p_h}_0$	& eoc & $\norm{\vec{u}-\vec{u}_h}_{1,h}$	& eoc	& $\norm{p-p_h}_0$	& eoc\\ \cmidrule(r){1-1} \cmidrule(lr){2-5} \cmidrule(l){6-9}
			894 &	6.6909e--01 &	 &	7.0451e--01 &	 &	6.6909e--01 &	 &	2.0654e+00 &	\\
			4137 &	4.5764e--01 &	0.74 &	4.1090e--01 &	1.06 &	4.5763e--01 &	0.74 &	1.2290e+00 &	1.02\\
			25650 &	2.7437e--01 &	0.84 &	1.9731e--01 &	1.21 &	2.7437e--01 &	0.84 &	6.5718e--01 &	1.03\\
			155364 &	1.5679e--01 &	0.93 &	9.3566e--02 &	1.24 &	1.5679e--01 &	0.93 &	3.6604e--01 &	0.97\\
			1376733 &	7.7344e--02 &	0.97 &	4.3282e--02 &	1.06 &	7.7344e--02 &	0.97 &	1.6909e--01 &	1.06\\ \bottomrule
		\end{tabular}
	\end{footnotesize}
\end{table}

\begin{table}[t]
	\centering
	\caption{Errors and experimental convergence orders of the standard and modified Crouzeix--Raviart methods on graded meshes with $\mu=0.4$, $\nu=10^{-3}$}
	\label{tab:errors2_standard_alt}
	\begin{footnotesize}
		\begin{tabular}{@{}rllllllll@{}}
			\toprule
			\textbf{CR}\hphantom{\textbf{-BDM}}	& \multicolumn{4}{c}{$\phi_1$} & \multicolumn{4}{c}{$\phi_2$} \\ \cmidrule(lr){2-5} \cmidrule(l){6-9}
			ndof 	& $\norm{\vec{u}-\vec{u}_h}_{1,h}$	& eoc	& $\norm{p-p_h}_0$	& eoc & $\norm{\vec{u}-\vec{u}_h}_{1,h}$	& eoc	& $\norm{p-p_h}_0$	& eoc\\ \cmidrule(r){1-1} \cmidrule(lr){2-5} \cmidrule(l){6-9}
			894 &	6.9908e+02 &	 &	7.1906e--01 &	 &	1.6311e+03 &	 &	4.7159e+00 &	\\
			4137 &	4.8222e+02 &	0.73 &	4.3157e--01 &	1.00 &	1.2720e+03 &	0.49 &	2.8249e+00 &	1.00\\
			25650 &	2.9154e+02 &	0.83 &	2.1233e--01 &	1.17 &	8.3565e+02 &	0.69 &	1.2864e+00 &	1.29\\
			155364 &	1.6660e+02 &	0.93 &	9.9137e--02 &	1.27 &	5.3725e+02 &	0.74 &	5.8967e--01 &	1.30\\
			1376733 &	8.2279e+01 &	0.97 &	4.4658e--02 &	1.10 &	2.6864e+02 &	0.95 &	2.1364e--01 &	1.40\\ \bottomrule
		\end{tabular}
	\end{footnotesize}
	
	\vspace{1ex}
	\label{tab:errors2_rt_alt}
	\begin{footnotesize}
		\begin{tabular}{@{}rllllllll@{}}
			\toprule
			\textbf{CR-RT}\hspace{1.2em}	& \multicolumn{4}{c}{$\phi_1$} & \multicolumn{4}{c}{$\phi_2$} \\ \cmidrule(lr){2-5} \cmidrule(l){6-9}
			ndof 	& $\norm{\vec{u}-\vec{u}_h}_{1,h}$	& eoc	& $\norm{p-p_h}_0$	& eoc & $\norm{\vec{u}-\vec{u}_h}_{1,h}$	& eoc	& $\norm{p-p_h}_0$	& eoc\\ \cmidrule(r){1-1} \cmidrule(lr){2-5} \cmidrule(l){6-9}
			894 &	6.6847e+02 &	 &	6.8825e--01 &	 &	6.6846e+02 &	 &	2.0599e+00 &	\\
			4137 &	4.5698e+02 &	0.74 &	4.0628e--01 &	1.03 &	4.5698e+02 &	0.74 &	1.2274e+00 &	1.01\\
			25650 &	2.7329e+02 &	0.85 &	1.9630e--01 &	1.20 &	2.7329e+02 &	0.85 &	6.5688e--01 &	1.03\\
			155364 &	1.5591e+02 &	0.93 &	9.3437e--02 &	1.24 &	1.5591e+02 &	0.93 &	3.6601e--01 &	0.97\\
			1376733 &	7.6745e+01 &	0.97 &	4.3269e--02 &	1.06 &	7.6745e+01 &	0.97 &	1.6909e--01 &	1.06\\ \bottomrule
		\end{tabular}
	\end{footnotesize}
	
	\vspace{1ex}
	\label{tab:errors2_bdm_alt}
	\begin{footnotesize}
		\begin{tabular}{@{}rllllllll@{}}
			\toprule
			\textbf{CR-BDM}	& \multicolumn{4}{c}{$\phi_1$} & \multicolumn{4}{c}{$\phi_2$} \\ \cmidrule(lr){2-5} \cmidrule(l){6-9}
			ndof 	& $\norm{\vec{u}-\vec{u}_h}_{1,h}$	& eoc	& $\norm{p-p_h}_0$	& eoc & $\norm{\vec{u}-\vec{u}_h}_{1,h}$	& eoc	& $\norm{p-p_h}_0$	& eoc\\ \cmidrule(r){1-1} \cmidrule(lr){2-5} \cmidrule(l){6-9}
			894 &	6.6909e+02 &	 &	7.0451e--01 &	 &	6.6909e+02 &	 &	2.0654e+00 &	\\
			4137 &	4.5764e+02 &	0.74 &	4.1090e--01 &	1.06 &	4.5763e+02 &	0.74 &	1.2290e+00 &	1.02\\
			25650 &	2.7437e+02 &	0.84 &	1.9731e--01 &	1.21 &	2.7437e+02 &	0.84 &	6.5718e--01 &	1.03\\
			155364 &	1.5679e+02 &	0.93 &	9.3566e--02 &	1.24 &	1.5679e+02 &	0.93 &	3.6604e--01 &	0.97\\
			1376733 &	7.7344e+01 &	0.97 &	4.3282e--02 &	1.06 &	7.7344e+01 &	0.97 &	1.6909e--01 &	1.06\\ \bottomrule
		\end{tabular}
	\end{footnotesize}
\end{table}
Tables \ref{tab:errors_standard_alt} and \ref{tab:errors2_standard_alt} show the errors for both choices of $\phi_i$. 
We see that while the asymptotic convergence rates are optimal for all methods, the additional gradient part $\nabla \phi_2$ in the data has a significant influence on the value of the velocity error of the standard method. 
In contrast, the modified methods show their pressure-robustness by yielding the same velocity solution, and thus unchanged velocity errors. The scaling of the velocity solution with $\nu^{-1}$ for fixed $\vec{f}$ is clearly visible when comparing the two tables.

\printbibliography

\end{document}